\newcommand{\eqq}[2]{\begin{equation}  #1  \label{#2} \end{equation}    }
\newcommand*{\norm}[1]{\left\Vert{#1}\right\Vert}
\newcommand*{\abs}[1]{\left\vert{#1}\right\vert}
\newcommand*{\Om}{\Omega}
\newcommand*{\mr}{\mathbb{R}}
\newcommand*{\izi}{\int_{0}^{\infty}}
\newcommand*{\izj}{\int_{0}^{1}}
\newcommand*{\iO}{\int_{\Omega}}
\newcommand*{\p}{\partial}
\newcommand*{\poch}{\frac{d}{dt}}
\newcommand*{\dm}{D^{(\mu)}}
\newcommand*{\im}{I^{(\mu)}}
\newcommand*{\ia}{I^{\alpha}}
\newcommand*{\al}{\alpha}
\newcommand*{\uz}{u_{0}}
\newcommand*{\ma}{\mu(\alpha)}
\newcommand*{\mg}{\frac{\mu(\alpha)}{\Gamma(1-\alpha)}}
\newcommand*{\ggg}{\Gamma(\gamma)}
\newcommand*{\vf}{\varphi}
\newcommand*{\ve}{\varepsilon}
\def\Re{\operatorname {Re}}
\def\Im{\operatorname {Im}}
\def\arg{\operatorname{arg}}
\def\divv{\operatorname{div}}
\newcommand*{\supp}{\mathop{\mathrm{supp}}}
\newcommand{\hd}{\hspace{0.2cm}}
\newcommand{\no}{\noindent}
\newcommand{\m}[1]{\mbox{#1}}
\newcommand{\C}{\mathbb{C}}
\newtheorem{rem}{{\textbf {Remark}}}
\newtheorem{lem}{{\textbf {Lemma}}}
\newtheorem{prop}{{\textbf {Proposition}}}
\newtheorem{theorem}{\textbf {Theorem}}
\newcommand{\ija}{I^{1-\alpha}}
\newcommand{\ep}{\varepsilon}
\newcommand{\tamj}{(t-\tau)^{\alpha-1}}
\newcommand{\dt}{d\tau}
\newcommand{\ga}{\Gamma(\alpha)}
\newcommand{\gja}{\Gamma(1-\alpha)}
\newcommand{\jga}{\frac{1}{\ga}}
\newcommand{\jgja}{\frac{1}{\gja}}
\newcommand{\la}{\lambda}
\newcommand{\rr}{\mathbb{R}}
\newcommand{\vm}{\varphi_{m}}
\newcommand{\vp}{\varphi}
\newcommand{\da}{D^{\alpha}}
\newcommand{\ra}{\partial^{\alpha}}
\newcommand{\un}{u^{n}}
\newcommand{\io}{\int_{\Omega}}
\newcommand{\izt}{\int_{0}^{t}}
\newcommand{\izth}{\int_{0}^{t-h}}
\newcommand{\ta}{(t-\tau)^{-\alpha}}
\newcommand{\taj}{(t-\tau)^{-\alpha-1}}
\newcommand{\ld}{L^{2}(\Omega)}
\newcommand{\sun}{\sum_{k=1}^{n}}
\newcommand{\cnk}{c_{n,k}}
\newcommand{\vmx}{\vm(x)}
\newcommand{\ddt}{\frac{d}{dt}}
\newcommand{\ftt}{\frac{w(x, \tau)}{w(x,t)}}
\newcommand{\fptbt}{ |w(x,t)|^{p-2}w(x,t) }
\newcommand{\fptabt}{|w(x,\tau)|^{p-2}w(x,\tau)}
\newcommand{\lp}{L^{p}(\Omega)}
\newcommand{\nlp}[1]{\| #1 \|_{\lp}}
\newcommand{\wct}{ w(\cdot, t)}
\newcommand{\wcta}{ w(\cdot, \tau)}
\newcommand{\ddta}{\frac{d}{d\tau}}
\newcommand{\vl}{\hat{v}}
\newcommand{\gl}{\hat{g}}
\newcommand{\kl}{\hat{k}}
\newcommand{\muad}{\mu(\al) d \al}
\newcommand{\cut}{\C\setminus (-\infty, 0]}
\newcommand{\pla}{p_{\lambda}}
\newcommand{\epz}{\ep_{0}}
\newcommand{\cmk}{c_{\overline{\mu}}}
\newcommand{\plk}{p_{\overline{\lambda}}}
\newcommand{\ral}{r^{\al}}
\newcommand{\sal}{\sin(\pi \al)}
\newcommand{\ing}{\int_{\gamma}^{1-\gamma}}
\newcommand{\jpi}{\frac{1}{2\pi i }}
\newcommand{\liN}{\lim_{N\rightarrow \infty}}
\newcommand{\idN}{\int_{\delta- i N}^{\delta +i N}}
\newcommand{\idi}{\int_{\delta- i \infty}^{\delta +i \infty}}
\newcommand{\ka}{\kappa}
\newcommand{\drr}{\frac{dr}{r}}
\newcommand{\sia}{\sin(\pi \al)}
\newcommand{\tal}{t^{-\al}}
\newcommand{\vs}{\varsigma}
\begin{document}
\title{\bf Decay of solutions to parabolic-type problem with distributed order  Caputo  derivative}

\author{ Adam Kubica${}^{*}$, Katarzyna Ryszewska\footnote{Department of Mathematics and Information
Sciences, Warsaw University of Technology, ul. Koszykowa 75, 00-662 Warsaw,
Poland, E-mail addresses:
A.Kubica@mini.pw.edu.pl, K.Ryszewska@mini.pw.edu.pl}}

\maketitle

\abstract{We consider the decay of solution to fractional diffusion equation with the distributed order Caputo derivative. We assume that the elliptic operator is time-dependent and that the weight function, contained in the definition of the distributed order Caputo  derivative, is just integrable. We establish the relation between behavior of weight function near zero and the decay rate of solution.   }

\vspace{0.4cm}

\no Keywords: distributed order fractional diffusion, weak solutions, time-depended elliptic operator, temporal decay of solution.

\vspace{0.4cm}

\no AMS subject classifications (2010): 35R13, 35K45, 26A33, 34A08.

\maketitle

\section{Introduction}

The exponential decay of solutions to parabolic problem with the elliptic operator containing only second-order terms is well known result. It is also known that if we replace time derivative by the fractional Caputo derivative $D^{\al}$, the decay of solutions is polynomial. What is more, if we make a step further and discuss equation with the distributed order Caputo derivative $\dm$, we may obtain logarithmic decay of solutions. That is why the parabolic-type equation with $\dm$ is often called the equation of ultraslow diffusion.

In this paper we ask a question, what is a factor that affects the decay rate of solutions to equation with the distributed Caputo derivative. The results presented here are mainly inspired by papers by A. Kochubei and R. Zacher  (see \cite{Kochubei}, \cite{Kochubei2}, \cite{Zacher}-\cite{ZacherStab}). We reconstruct and extend some results from those papers, but we develop them in the framework introduced in \cite{nasza} (see also \cite{KY}, \cite{KRR}).

The logarithmic decay of solutions to  the equation with $\dm$ and time-independent elliptic operator was proved in \cite{Y} under assumption that the measure $\mu$, contained in the definition of the distributed Caputo derivative, is non-negative, continuous and $\mu(0)>0$ or $\mu$ can be represented by $\mu(\al) = \mu(0) + o(\al^{\delta})$ for some $\delta > 0$ as $\al$ tends to zero. In this paper we will show that the support of $\mu$ plays essential role in asymptotics behavior of solutions.

Very important result concerning the decay of solutions to nonlocal in time problems was obtained in \cite{Zacher}. The authors consider equation
\eqq{
\poch (k*[u-\uz]) - \divv(A(t,x)Du) = 0,
}{Za}
with zero boundary conditions and the initial condition $u(0) = \uz$. The kernel $k$ is given function of type $\mathscr{PC}$  (see \cite{Zacher}). The main result, given in theorem 1.1 and corollary 1.1, claims that the $L^{2}$ norm of solution to (\ref{Za}) is estimated by initial data multiplied by the solution to ordinary equation
\eqq{
\poch (k*[u_{\lambda}-\uz])(t) + \lambda u_{\lambda}(t)= 0,
}{ordinary}
where $\lambda$ is a constant, which depends on Poincare constant and ellipticity constant. What is more, obtained estimate is sharp. As an example, the authors discussed the equation with the distributed order Caputo derivative with weight function  $\mu \equiv 1$ and obtained logarithmic estimate. In this paper we explore this problem for the distributed order derivative and we investigate the relation between the decay rate of solution to problem (\ref{ordinary}) and the weight function $\mu$.

In \cite{nasza} we showed that assuming only $\mu \in L^{1}$,  $\mu \not \equiv   0$ we may represent $\dm u$ as $\poch (k*[u-\uz])$, where $k$ is of type $\mathscr{PC}$ , thus we are able to apply results from \cite{Zacher}. However, in this paper we give more straightforward argument than the one presented in~\cite{Zacher}, which is based on approach introduced in \cite{KY}-\cite{nasza}.

According to the results obtained in \cite{Zacher}, it is natural to study first the long-time behavior of solutions to equation
\eqq{
\dm u + \lambda u = 0,  \hd  \lambda > 0.
}{ordinary2}
The formula for solution to (\ref{ordinary2}) under assumption that $\mu \in C^{2}[0,1],$ $\mu(1) > 0$ was obtained in \cite{Kochubei}. The same formula was calculated in \cite{M} assuming only $\mu \in L^{1}(\Om),$ $\mu >0$, however without rigorous proof. One of the aim of this paper is to provide a detailed proof of the formula for solution to (\ref{ordinary2}).

In the papers mentioned above, it was shown that the behavior of measure $\mu$ near zero has an influence on the asymptotics of solutions. In \cite{Kochubei} the logarithmic decay was obtained under assumption that $\mu$ is continuous and $\mu(0) > 0$. If we replace the last condition by $\mu(\al) \sim a\al^{\delta}$ as $\al$ tends to zero for some $a,\delta > 0$, then the solution decays as $(\log t)^{-1-\delta}$. Further analysis was made in \cite{Kochubei2}, where examples of faster decay of $\mu$ near zero was considered. It was proved that, they entrails faster decay of solutions, however never polynomial.

In this paper we  investigate equation (\ref{ordinary2}) assuming less regularity on $\mu$. Precisely, we state that if $\mu \in L^{1}(0,1)$, $\mu \not \equiv 0$ the equation (\ref{ordinary2}) posses the unique absolutely continuous solution. If we assume additionally that $\ma/ \al$ is integrable on $[0,1]$, then we are able to find explicit formula for solution.  What is more, we extend the ideas from \cite{Kochubei}, \cite{Kochubei2}, that is we investigate dependence between the behavior of $\mu$ near the origin and rapidity of solution decay. Finally, under the assumption that support of $\mu$ is cut off from zero, we prove polynomial decay estimates.

The paper is organized as follows. In second section we recall the  definition of fractional operators and formulate the results.
  The third section is devoted to study the equation (\ref{ordinary2}) and decay rate of its solution. In last section we prove the main result, concerning polynomial decay of solutions to parabolic-type equation with the distributed order Caputo derivative.

\section{Notation and main results}

Equation (\ref{ordinary2}) contains distributed order fractional derivative. To define this operator we need to recall the fractional integration operator  $\ia $ and the Riemann-Liouville \ fractional  \m{derivative $\ra$}
\eqq{\ia f (t) = \jga \izt \tamj  f(\tau) \dt \hd  \m{ for }\alpha>0,}{fI}
\eqq{\partial^{\alpha}f(t) =\ddt \ija f(t)=\jgja \ddt \izt \ta f(\tau)
\dt \m{ \hd for \hd  }\alpha\in (0,1).}{fRL}
 Further, by $\da$ we denote the fractional Caputo derivative
 \[
 \da f(t)= \ra [f(\cdot)- f(0)](t),
\]
where $\alpha \in (0, 1)$. Finally, for a nonnegative and measurable function $\mu: [0,1]\longrightarrow \mathbb{R}$ we define the distributed order Caputo derivative
\eqq{\dm f (t) = \int_{0}^{1}(\da f)(t) \mu(\al) d\al.}{disCap}
Our basic assumption concerning $\mu$ is as follows
\eqq{\mu \in L^{1}(0,1), \ \  \izj \mu(\al) d\al =:c_{\mu} > 0  . }{aa}

 \no First we recall the remark from \cite{nasza}.
\begin{rem}
The assumption (\ref{aa}) implies that
\eqq{
\exists \ \ 0<\gamma< \frac{1}{2} \ \  \int_{\gamma}^{1-\gamma} \ma d\al = \frac{1-\gamma}{2} c_{\mu} > 0.
}{defg}
This statement easily follows from Darboux theorem applied to function $h:[0,\frac{1}{2}]\rightarrow \mr$ defined by
\[
h(x) = \int_{x}^{1-x}\ma d\al - \frac{1-x}{2}\izj \ma d\al.
\]
\label{gamma}
\end{rem}
\no We recall notation from \cite{Kochubei}. For $f$ absolutely continuous on $[0,T]$ the distributive order derivative takes the form
\eqq{
(\dm f)(t) = (k*f')(t),
}{e1}
where
\eqq{
k(t) = \izj \mg t^{-\al}d\al
}{e2}
and $*$ here and in the whole paper denotes the convolution on $(0,\infty)$, i.e. $(k*f)(t) = \izt k(t-\tau)f(\tau)d\tau$.
Then $\dm f$ belongs to $L^{1}(0,T)$ (see (18) in \cite{nasza}).\\
Before we present the results of this paper we need to recall theorem~4 from \cite{nasza}.

\begin{theorem}
If $\mu$ satisfies (\ref{aa}), then there exists nonnegative $g\in L^{1}_{loc}[0,\infty)$ such that the operator of fractional integration $\im$, defined by the formula $ \im u =g*u$ satisfies
\eqq{(\dm \im u)(t) = u(t) \hd \m{ for } \hd  u \in L^{\infty}(0,T),}{fi1}
\eqq{(\im\dm u)(t) = u(t) - u(0) \hd \m{ for } \hd  u \in AC[0,T].}{fi2}
Furthermore, $g$ satisfies the estimate
\eqq{g(t)\leq c\max\{t^{\gamma-1},t^{-\gamma} \}}{estig}
for some positive $c$ and $\gamma\in (0,\frac{1}{2})$, which depend only on $\mu$.
\label{fint}
\end{theorem}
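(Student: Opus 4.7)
The plan is to define $g$ as the inverse Laplace transform of $1/\phi(s)$, where
\begin{equation*}
\phi(s):=\izj \ma s^{\al}\,d\al = s\,\widetilde{k}(s),
\end{equation*}
the second identity coming from (\ref{e2}) and Fubini via $\int_{0}^{\infty}e^{-st}t^{-\al}dt = \Gamma(1-\al)s^{\al-1}$. The function $\phi$ extends analytically to $\C\setminus(-\infty,0]$ and does not vanish in the right half-plane, because $\Re \phi(s) = \izj \ma |s|^{\al}\cos(\al\arg s)\,d\al > 0$ whenever $|\arg s|<\pi/2$. Deforming the Bromwich contour onto a Hankel contour wrapping $(-\infty,0]$ then yields
\begin{equation*}
g(t) = \frac{1}{\pi}\izi e^{-rt}\,\frac{\izj \ma r^{\al}\sin(\pi\al)\,d\al}{|\phi(re^{i\pi})|^{2}}\,dr,
\end{equation*}
from which $g\geq 0$ is manifest; membership of $g$ in $L^{1}_{loc}[0,\infty)$ will follow from the pointwise bound.

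The algebraic core is the identity $k*g\equiv 1$, verified on the Laplace side as $\mathcal{L}(k*g)(s) = \widetilde{k}(s)/\phi(s) = 1/s = \mathcal{L}(1)(s)$. Given this, (\ref{fi1}) for $u\in L^{\infty}(0,T)$ follows by computing, with $(g*u)(0)=0$ and (\ref{e1}) rewritten as $\dm f = \ddt(k*[f-f(0)])$ together with Fubini,
\begin{equation*}
\dm(g*u)(t) = \ddt\bigl((k*g)*u\bigr)(t) = \ddt(1*u)(t) = u(t)
\end{equation*}
a.e.\ on $(0,T)$. The identity (\ref{fi2}) for $u\in AC[0,T]$ follows directly from (\ref{e1}): $\im(\dm u)(t) = g*(k*u')(t) = 1*u'(t) = u(t)-u(0)$.

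For the pointwise estimate (\ref{estig}), Remark \ref{gamma} supplies the decisive lower bound on the denominator. For $r\in(0,1]$,
\begin{equation*}
\Im\phi(re^{i\pi}) = \izj \ma r^{\al}\sin(\pi\al)\,d\al \geq \sin(\pi\gamma)\,r^{1-\gamma}\ing \ma\,d\al \geq c_{2}\,r^{1-\gamma},
\end{equation*}
and analogously $\Im\phi(re^{i\pi}) \geq c_{2}r^{\gamma}$ for $r\geq 1$. Using $|\phi|^{2}\geq(\Im\phi)^{2}$, the integrand in the representation of $g$ is bounded by $c_{2}^{-1}r^{\gamma-1}$ on $(0,1]$ and by $c_{2}^{-1}r^{-\gamma}$ on $[1,\infty)$. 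Splitting $g(t)$ at $r=1$ and, according as $t\leq 1$ or $t\geq 1$, either bounding $e^{-rt}\leq 1$ or substituting $s=rt$, the desired bound $g(t)\leq c\max\{t^{\gamma-1},t^{-\gamma}\}$ drops out.

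The genuine difficulty is the Bromwich-to-Hankel deformation, which demands uniform control of $1/\phi(s)$ on the large semicircular arcs and, more subtly, on small arcs around $s=0$, where $\phi$ itself vanishes. Remark \ref{gamma} applied on every ray supplies a lower bound $|\phi(s)|\geq c|s|^{1-\gamma}$ near the origin, so the small-arc contribution is of order $|s|^{\gamma}\to 0$ and the deformation is legitimate; the remaining manipulations are routine.
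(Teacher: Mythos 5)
Your construction is correct and coincides with the paper's: the paper does not reprove this theorem but recalls it from \cite{nasza}, recording the representation (\ref{calka}) for $g$, which is exactly the Hankel-contour formula you derive (your $|\phi(re^{i\pi})|^{2}$ is the denominator in (\ref{calka})), and your Bromwich-to-Hankel deformation is precisely the content of lemma~\ref{odwrotna} in the appendix, with the lower bounds on $\phi$ coming from remark~\ref{gamma} just as in the paper's verification of the analogous assumptions in the proof of theorem~\ref{tw1}. The identity $k*g\equiv 1$ on the Laplace side and the splitting of the estimate at $r=1$ likewise match the source argument, so no gap.
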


\no Here $\gamma$ comes from remark~\ref{gamma} and function $g$ is given by formula (see the proof of theorem~4, \cite{nasza})
\eqq{g(t) = \frac{1}{\pi}\izi e^{-rt}\frac{\izj \sin (\pi \al)r^{\al}\mu(\al)d\al}{\left(\izj \cos (\pi \al)r^{\al}\mu(\al)d\al\right)^{2}+\left(\izj \sin (\pi \al)r^{\al}\mu(\al)d\al\right)^{2}}dr.}{calka}

\no Firstly, we will focus our attention on the problem

\eqq{
 \left\{ \begin{array}{rcl}
\dm v + \lambda v &=& 0,\\
v(0)& =& v_{0}.
 \end{array} \right.
 }{ordinary3}

\no As mentioned in introduction, system (\ref{ordinary3}) was already studied in \cite{Kochubei},\cite{Y} and \cite{M} . However, our first aim is to show that integrability of $\mu$ is enough to deduce the existence of absolutely continuous solution.

\no Let us denote  $g^{[m]}*h(t)= g* \dots * g*h(t)$, where   $g$ is taken $m$-times. Then we can formulate the following proposition.

\begin{prop}
Assume that $\mu$ satisfies (\ref{aa}). Then for each $v_{0}, \la \in \mathbb{C}$ there exists the unique absolutely continuous  solution to (\ref{ordinary3}). Furthermore, it is given by the formula
\eqq{v(t)= v_{0}\sum_{k=0}^{\infty} (- \la )^{k} g^{[k]}*1 (t),   }{h2}
where the series is absolutely convergent in Sobolev space $W^{1,1}(0,T)$ for each $T>0$.
\label{zwyczajne}
\end{prop}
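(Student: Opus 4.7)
The plan is to convert (\ref{ordinary3}) into a Volterra integral equation via Theorem~\ref{fint}, construct the solution as a Neumann series of iterated convolutions with $g$, and obtain uniqueness by iterating the equation. Fix $T>0$. First, I would apply $\im$ to (\ref{ordinary3}) and use (\ref{fi2}) together with $\im u=g*u$ to rewrite it as the equivalent integral equation
\eqq{v(t)+\lambda(g*v)(t)=v_{0},\qquad t\in[0,T].}{inteqplan}

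Next, I would attack (\ref{inteqplan}) by Picard iteration. Set $G_{0}\equiv 1$ and $G_{k}:=g^{[k]}*1$ for $k\geq 1$, so that $G_{k}=g*G_{k-1}$, and let $V_{n}:=v_{0}\sum_{k=0}^{n}(-\lambda)^{k}G_{k}$; the candidate is $v:=\lim_{n\to\infty}V_{n}$. The key technical ingredient, distilled from (\ref{estig}), is the singular pointwise bound $g(t)\leq c_{T}\,t^{\gamma-1}$ on $(0,T]$: on the bounded interval $[0,T]$ the piece $t^{-\gamma}$ in $\max\{t^{\gamma-1},t^{-\gamma}\}$ is majorized by a $T$-dependent constant times $t^{\gamma-1}$. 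An induction on $k$ using the Beta integral then yields
\eqq{g^{[k]}(t)\leq c_{T}^{k}\,\frac{\Gamma(\gamma)^{k}}{\Gamma(k\gamma)}\,t^{k\gamma-1},\qquad G_{k}(t)\leq c_{T}^{k}\,\frac{\Gamma(\gamma)^{k}}{\Gamma(k\gamma+1)}\,t^{k\gamma}.}{Gkplan}
Since the Mittag-Leffler-type series $\sum z^{k}/\Gamma(k\gamma+1)$ is entire, $\sum|\lambda|^{k}G_{k}$ converges uniformly on $[0,T]$ and $\sum|\lambda|^{k}g^{[k]}$ converges in $L^{1}(0,T)$. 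Using $G_{0}'=0$ and $G_{k}'=g^{[k]}$ for $k\geq 1$ (from $G_{k}(t)=\int_{0}^{t}g^{[k]}(s)\,ds$), termwise differentiation is legitimate, so $V_{n}\to v$ in $W^{1,1}(0,T)$ and $v\in AC[0,T]$. Passing to the limit in $V_{n}-v_{0}=-\lambda\,g*V_{n-1}$ yields (\ref{inteqplan}). Finally, applying $\dm$ termwise using $\dm G_{k}=\dm\im G_{k-1}=G_{k-1}$ (from (\ref{fi1})) gives $\dm V_{n}=-\lambda V_{n-1}$, and passing to the limit produces $\dm v=-\lambda v$, so $v$ solves (\ref{ordinary3}).

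For uniqueness, if $v_{1},v_{2}\in AC[0,T]$ are two solutions, their difference $u$ satisfies $u(0)=0$ and, via the reduction, $u=-\lambda\,g*u$. Iterating $n$ times yields $|u(t)|\leq|\lambda|^{n}\|u\|_{L^{\infty}(0,T)}\|g^{[n]}\|_{L^{1}(0,T)}$, and integrating (\ref{Gkplan}) shows the right-hand side tends to $0$, so $u\equiv 0$. Since $T$ was arbitrary, uniqueness holds on $[0,\infty)$. The main technical obstacle I expect is establishing the singular pointwise bound on $g$ on bounded intervals and propagating it through the iterated convolutions to secure $W^{1,1}$-convergence of both the series and its termwise derivative; once this estimate is in hand, the remaining verifications are routine Volterra theory.
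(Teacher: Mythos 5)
Your proposal is correct and follows essentially the same route as the paper: reduction to the Volterra equation $v+\lambda\, g*v=v_{0}$ via Theorem~\ref{fint}, a Neumann series in the iterated kernels $g^{[k]}$ with $W^{1,1}$-convergence obtained by differentiating termwise, and uniqueness by iterating the integral equation. The only difference is one of presentation: where the paper defers the absolute convergence of $\sum(-\lambda)^{k}g^{[k]}*1$ and the Gronwall-type uniqueness step to lemma~1 of \cite{nasza}, you make these explicit through the Mittag-Leffler-type bounds on $g^{[k]}$ derived from (\ref{estig}).
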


\no Formula  (\ref{h2}) is not convenient to study the decay of solutions, therefore we need to obtain another representation of solution to (\ref{ordinary3}). However, to that end we need one more assumption concerning $\mu$.

\begin{theorem}\label{tw1}
Assume that $\lambda > 0$, $v_{0} \in \mr$ and $\mu$ is nonnegative function satisfying (\ref{aa}) and
\eqq{\izj \frac{\mu(\al)}{\al} d \al=:\cmk<\infty.}{f1}
Then the unique absolutely continuous solution to equation  (\ref{ordinary3})  is given by the formula
 \eqq{
 v(t) = v_{0}\frac{\lambda}{\pi}\izi \frac{e^{-rt}}{r} \frac{\izj r^{\al} \sin (\pi \al) \ma d\al}{(\lambda + \izj r^{\al} \cos (\pi \al) \ma d\al )^{2} + (\izj r^{\al} \sin (\pi \al) \ma d\al)^{2}}dr
 }{postacv}
\end{theorem}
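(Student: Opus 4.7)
By Proposition \ref{zwyczajne} the equation (\ref{ordinary3}) admits a unique absolutely continuous solution, so only the identification with the right-hand side of (\ref{postacv}) remains. The plan is to pass to the Laplace transform, solve the resulting algebraic relation for $\widehat{v}(p)$, and then recover $v$ from the Bromwich integral by collapsing the contour onto a Hankel contour around the branch cut $(-\infty,0]$.

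First, applying the Laplace transform to $\dm v+\lambda v=0$ together with (\ref{e1})--(\ref{e2}) and the identity $\mathcal{L}[t^{-\al}/\gja](p)=p^{\al-1}$ yields $\widehat{k}(p)=\Phi(p)/p$, where
\[
\Phi(p):=\izj p^{\al}\ma\,d\al,
\]
and hence
\[
\widehat{v}(p)=\frac{v_{0}\,\Phi(p)}{p\,(\Phi(p)+\lambda)}.
\]
To set up the inversion one needs $\widehat{v}$ to be holomorphic on $\cut$. Analyticity of $\Phi$ there is standard, while non-vanishing of $\Phi(p)+\lambda$ follows from the observation that for $p=re^{i\theta}$ with $\theta\in(-\pi,\pi)\setminus\{0\}$ one has
\[
\Im\Phi(p)=\izj r^{\al}\sin(\al\theta)\ma\,d\al\neq 0
\]
(using $\mu\geq 0$, $c_{\mu}>0$, and the constant sign of $\sin(\al\theta)$ on $(0,1)$), while $\Phi(r)+\lambda>0$ for $r>0$.

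Next, I would apply the complex inversion formula
\[
v(t)=\jpi\idi e^{pt}\widehat{v}(p)\,dp
\]
and collapse the Bromwich contour onto a Hankel contour around the cut. The large arcs drop out thanks to the bound $|\widehat{v}(p)|=O(1/|p|)$ as $|p|\to\infty$ combined with Jordan's lemma (valid for $t>0$), while the contribution of the small arc around the origin vanishes in virtue of assumption (\ref{f1}), which controls the behaviour of $\widehat{v}(p)$ as $p\to 0$. Parametrising the two sides of the cut by $p=re^{\pm i\pi}$ gives $p^{\al}=r^{\al}(\kal\pm i\sal)$, whence $\Phi(re^{\pm i\pi})=A(r)\pm iB(r)$ with
\[
A(r):=\izj r^{\al}\kal\ma\,d\al,\qquad B(r):=\izj r^{\al}\sal\ma\,d\al.
\]
A short algebraic computation then produces
\[
\Im\widehat{v}(re^{i\pi})=-\frac{v_{0}}{r}\cdot\frac{\lambda B(r)}{(A(r)+\lambda)^{2}+B(r)^{2}},
\]
and (\ref{postacv}) follows from the standard cut-plane inversion identity $v(t)=-\frac{1}{\pi}\izi e^{-rt}\,\Im\widehat{v}(re^{i\pi})\,dr$.

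The principal technical obstacle is the rigorous justification of the contour deformation, in particular the uniform estimates on $\Phi(p)$ both as $|p|\to\infty$ and as $p\to 0$ across all angles in the cut plane, and the verification that the contributions of the large and the small arcs tend to zero. The extra hypothesis (\ref{f1}) enters here precisely to supply the integrability needed to discard the small arc about the origin.
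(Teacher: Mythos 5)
Your overall strategy -- Laplace transform plus collapse onto the branch cut -- is the same as the paper's, and your algebra on the cut (the computation of $\Im\widehat{v}(re^{i\pi})$ and the resulting $G(r)$) is correct. But the logical direction you choose creates a genuine gap. You apply the Laplace transform to the unknown solution $v$ from Proposition \ref{zwyczajne} and then invert. For that you need $\widehat{v}(p)$ and $\widehat{v'}(p)$ to exist for $\Re p>0$, i.e.\ a global (sub)exponential growth bound on $v$ and $v'$; Proposition \ref{zwyczajne} only gives absolute convergence of the series in $W^{1,1}(0,T)$ for each finite $T$, with no control as $T\to\infty$, so the identity $\widehat{v}(p)=v_{0}\Phi(p)/\bigl(p(\Phi(p)+\lambda)\bigr)$ on the whole half-plane is unjustified as stated. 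The paper deliberately runs the argument in the opposite direction precisely to avoid this: it takes $F(p)$ as a \emph{candidate}, uses Lemma \ref{odwrotna} to show that the explicit function in (\ref{postacv}) has Laplace transform $v_{0}F(p)$ (property (\ref{h5})), proves separately that this function is absolutely continuous, that $v(0)=v_{0}$ (itself a nontrivial contour computation), and that it satisfies the ODE -- the last point via $g*w=0$ and a mollification argument, since, as the paper stresses, one ``can not directly verify'' the equation -- and only then invokes uniqueness.

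The second, larger omission is that the contour work you defer as ``the principal technical obstacle'' is in fact the technical core of the proof, not a routine afterthought. Discarding the large arcs requires the lower bound $\bigl|\izj p^{\al}\muad\bigr|\geq \tilde{c}|p|^{\gamma}$ for $|p|\geq 1$ (inequality (\ref{e5}), which rests on the choice of $\gamma$ in (\ref{defg})); the behaviour at the origin requires showing $\izj p^{\al}\muad\to 0$ uniformly in the argument; and the dominated passage to the two sides of the cut requires an integrable majorant $a(r)$ with $a(r)/(1+r)\in L^{1}(\mr_{+})$, whose integrability near $r=0$ is exactly where (\ref{f1}) enters through $\int_{0}^{1}r^{\al-1}\ma\,d\al$. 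You correctly identify \emph{where} (\ref{f1}) is needed, but none of these estimates is carried out, and without them neither Jordan's lemma nor the small-circle argument can be invoked. Finally, to upgrade the inversion from an a.e.\ identity to equality of continuous functions you would still need the continuity of the right-hand side of (\ref{postacv}), which is the content of the paper's Step 2 (the proof that $r^{-1}G(r)\in L^{1}(0,\infty)$) and is absent from your outline.
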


\no It should be added that the above formula for solution coincide with the one obtained in \cite{Kochubei}, however in \cite{Kochubei} author consider different assumptions concerning $\mu$. Taking advantage from representation (\ref{postacv}) we may obtain following decay result, which corresponds to ones obtained in \cite{Kochubei} and \cite{Kochubei2}.  In particular, we show that if  $\mu$ vanishes faster in zero, then the solution of (\ref{ordinary3}) decays faster in infinity.

\begin{prop}
Assume that (\ref{aa}) and (\ref{f1}) holds. If $v$ is the solution to (\ref{ordinary3}) with $\la >0$, then for $t$ large enough the following estimate holds
\eqq{|v(t)|\leq \frac{c_{0}}{\ln{t}},}{f10}
where $c_{0}$ depends only on $v_{0}$, $\la$,  $c_{\mu}$ and $\cmk$. Furthermore, if $\vs$ is some fixed number from the interval $(0,1)$, then
\eqq{\m{ if } \hd \ka,a >0 \m{ \hd and \hd }    \ma \leq a \al^{\ka} \m{ \hd  a.e. on }(0,\vs), \hd  \hd \m{ then } \hd |v(t)|\leq \frac{c}{(\ln{t})^{\ka+1}},  }{f11}
\eqq{ \m{ if \hd }  \ka,a, \beta, m >0 \m{ \hd and \hd }  \ma \leq a\al^{\ka} e^{-\frac{\beta}{\al^{m}}}\m{ \hd a.e. on }(0,\vs) , \hd  \m{ then for any }q\in (0,1) }{f12}
\eqq{|v(t)|\leq c \frac{\Gamma(\ka+1)}{(1-q)^{\ka+1} (\ln{t})^{\ka+1}} \cdot \exp\left( - m^{\frac{1}{m+1}} (1+\frac{1}{m}) (q^{m}\beta)^{\frac{1}{m+1}}(\ln{t})^{\frac{m}{m+1}}  \right) }{f13}
\eqq{\m{ if } \hd a>0 \m{ \hd and \hd } \ma \leq a\exp(-e^{\frac{1}{\al}})\m{ \hd a.e. on }(0,\vs), \hd \m{ then } \hd |v(t)| \leq \frac{c}{t^{\ln{\ln{t}}}}.   }{f14}
Finally, if $\delta \in (0,1)$, then
\eqq{\supp{\mu} \subseteq [\delta,1] \iff |v(t)|\leq \frac{c}{t^{\delta}} \hd \m { for } \hd t > 1, \hd \m{ for some positive } c.}{f15}
\label{decayode}
\end{prop}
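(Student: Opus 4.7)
The proof starts from the explicit representation (\ref{postacv}). Introducing $\psi(r)=C(r)+iS(r)$ with $S(r)=\int_0^1 r^\alpha\sin(\pi\alpha)\mu(\alpha)d\alpha$ and $C(r)$ defined analogously, (\ref{postacv}) becomes
\[
v(t)=\frac{v_0\lambda}{\pi}\int_0^\infty \frac{e^{-rt}}{r}\cdot\frac{S(r)}{|\lambda+\psi(r)|^2}\,dr.
\]
Since $r^\alpha\to 0$ pointwise for each $\alpha>0$ as $r\to 0^+$ and the integrand defining $\psi(r)$ is dominated by $\mu\in L^1$, dominated convergence gives $\psi(r)\to 0$, so I fix $r_0\in(0,1)$ with $|\psi(r)|\leq\lambda/2$ on $[0,r_0]$; on this range $|\lambda+\psi(r)|^2$ is pinched between $\lambda^2/4$ and $9\lambda^2/4$. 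The tail $\int_{r_0}^\infty$ is exponentially small in $t$ thanks to $e^{-rt}$ and will not affect the leading asymptotics, so everything reduces to sharp upper estimates of $\int_0^{r_0}\frac{e^{-rt}S(r)}{r}\,dr$, together with a matching lower estimate for the converse half of (\ref{f15}).

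The master inequality $\sin(\pi\alpha)\leq\pi\alpha$ reduces $S(r)$ to bounding $\int_0^1 \alpha r^\alpha\mu(\alpha)\,d\alpha$ on $r\in(0,1)$. For (\ref{f10}) I rewrite $\alpha\mu(\alpha)=\alpha^2\mu(\alpha)/\alpha$ and use the elementary maximum $\alpha^2 r^\alpha\leq 4/(e^2(\ln r)^2)$; combined with (\ref{f1}) this gives $S(r)\leq C\cmk/(\ln r)^2$, after which splitting $\int_0^{r_0}e^{-rt}\,dr/(r(\ln r)^2)$ at $r=1/\sqrt t$ and substituting $s=|\ln r|$ on $(0,1/\sqrt t)$ produces exactly $2/\ln t$, while the remaining piece is $O(e^{-\sqrt t})$. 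For (\ref{f11})--(\ref{f14}) I restrict to $\alpha\in(0,\vs)$ (the complement yields an $r^\vs$ factor, producing a much faster polynomial in $t$ that is absorbed) and swap the order of integration:
\[
\int_0^{r_0}\frac{e^{-rt}S(r)}{r}\,dr\leq \pi\int_0^\vs \alpha\mu(\alpha)\int_0^\infty r^{\alpha-1}e^{-rt}\,dr\,d\alpha=\pi\int_0^\vs \alpha\mu(\alpha)\Gamma(\alpha)t^{-\alpha}\,d\alpha,
\]
with $\alpha\Gamma(\alpha)=\Gamma(\alpha+1)\leq 1$ absorbed. For (\ref{f11}) this becomes $a\int_0^\vs \alpha^\kappa t^{-\alpha}\,d\alpha\leq \Gamma(\kappa+1)/(\ln t)^{\kappa+1}$ by $u=\alpha\ln t$. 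For (\ref{f13}) the key device is the Young-type split
\[
e^{-\beta/\alpha^m-\alpha\ln t}=e^{-\beta/\alpha^m-q\alpha\ln t}\cdot e^{-(1-q)\alpha\ln t},\qquad q\in(0,1),
\]
whose first factor attains its Laplace maximum at $\alpha^*=(m\beta/(q\ln t))^{1/(m+1)}$ with value $\exp(-(1+1/m)\,m^{1/(m+1)}(q^m\beta)^{1/(m+1)}(\ln t)^{m/(m+1)})$, producing precisely the exponential of (\ref{f13}), while the remaining factor integrated against $\alpha^\kappa$ yields the $\Gamma(\kappa+1)/((1-q)\ln t)^{\kappa+1}$ prefactor. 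Estimate (\ref{f14}) follows from the analogous saddle analysis of $-\alpha\ln t-e^{1/\alpha}$, whose critical point is implicitly given by $e^{1/\alpha^*}/(\alpha^*)^2=\ln t$.

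For (\ref{f15}), if $\supp\mu\subseteq[\delta,1]$, then $r^\alpha\leq r^\delta$ for $\alpha\geq\delta$ and $r\in(0,1)$ gives $S(r)\leq\pi c_\mu r^\delta$, so $\int_0^{r_0}e^{-rt}r^{\delta-1}\,dr\leq\Gamma(\delta)t^{-\delta}$ closes the argument. Conversely, if $\supp\mu\not\subseteq[\delta,1]$ I can pick $0<\delta_0<\delta'<\delta$ with $\int_{\delta_0}^{\delta'}\mu\,d\alpha>0$; the monotonicity $r^\alpha\geq r^{\delta'}$ for $\alpha\leq\delta'$ and $r\in(0,1)$ then yields $S(r)\geq\sin(\pi\delta_0)\int_{\delta_0}^{\delta'}\mu\,d\alpha\cdot r^{\delta'}$, which combined with $|\lambda+\psi(r)|^2\leq 9\lambda^2/4$ on $[0,r_0]$ gives $|v(t)|\geq c\,t^{-\delta'}$ for large $t$, contradicting $|v(t)|\leq c/t^\delta$ since $\delta'<\delta$. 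The most delicate point throughout is the bookkeeping of constants in the saddle-point estimates for (\ref{f13})--(\ref{f14}), where the free parameter $q$ has to be tuned so that losing a factor $q^m$ inside the exponential is cheaper than paying $(1-q)^{-(\kappa+1)}$ in the polynomial prefactor.
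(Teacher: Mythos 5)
Your proposal is correct and follows essentially the same route as the paper: isolate the small-$r$ part of (\ref{postacv}) where the denominator is pinched near $\lambda^{2}$, discard the exponentially small tail, exchange the order of integration to reduce everything to $\int_{0}^{1}t^{-\al}\,\Gamma(\al+1)\,\mu(\al)\,d\al$, and then run the Laplace analysis in $\al$ (including the identical $q$-split for (\ref{f13}) and the same contradiction argument via a lower bound $G(r)\geq c\,r^{b}$ for the converse half of (\ref{f15})). The only genuine variation is in (\ref{f10}): you bound $S(r)\leq C\cmk/(\ln r)^{2}$ pointwise and split the $r$-integral at $1/\sqrt{t}$, whereas the paper applies Fubini first and then uses $\max_{\al}\al t^{-\al}=1/(e\ln t)$; both are equally elementary and give the same constant structure. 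One caution concerning (\ref{f14}): your saddle analysis (critical point $e^{1/\al^{*}}/(\al^{*})^{2}=\ln t$, hence $\al^{*}\sim 1/\ln\ln t$) produces the bound $|v(t)|\leq c\,t^{-1/\ln\ln t}$, which is exactly what the paper's own cutoff at $b=(\ln\ln t)^{-1}$ yields, and which is the only bound compatible with (\ref{f15}) --- a genuinely super-polynomial bound such as $t^{-\ln\ln t}$ would imply $\supp\mu\subseteq[\delta,1]$ for every $\delta$, contradicting $\supp\mu=[0,1]$ in this case. So the exponent displayed in (\ref{f14}) must be read as $1/\ln\ln t$, and your sketch proves that corrected form rather than the literal display; make the saddle computation explicit so this is visible.
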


\no The above assumption concerning the support of $\mu$ means that $\ma=0$ a.e. on~$(0,\delta)$.

In comparison with papers \cite{Kochubei} and \cite{Kochubei2} we can notice that the estimate (\ref{f10}) coincides with (2.16) obtained in theorem~2.3 from \cite{Kochubei}, however, our assumption concerning $\mu$ is different. Next, estimate (\ref{f11}) coincides with (2.17) from \cite{Kochubei} but do not require continuity of $\mu$. Results (\ref{f13}), (\ref{f14}) extend idea from theorem 1 in \cite{Kochubei2} and give more subtle relation between decay rate of $\mu$ at the origin and rapidity of decay of solution at infinity.

\no Estimate (\ref{f15}) is on special interest because it states that if $\supp \mu$ is cut off from zero then the solution to distributed order problem (\ref{ordinary3}) decays as solution to corresponding problem with Caputo derivative.

In this paper we will study the decay of solution to the fractional diffusion equation with the distributed order Caputo derivative. To be more precise, we  assume that  $\Om \subseteq \mr^{N}$ is an open and bounded set, $\p \Om \in C^{2}$ and $N \geq 2.$ We will consider the following problem
\eqq{
 \left\{ \begin{array}{lll}
\dm u = Lu & \textrm{ in } & \Om \times (0,T)=: \Om^{T}\\
 u|_{\p \Om} = 0 & \textrm{ for  } & t \in (0,T)\\
 u|_{t=0} =\uz & \textrm{ in } & \Om, \\
\end{array} \right.
}{ba}
where
\[
Lu(x,t) = \sum_{i,j=1}^{N} D_{i}(a_{i,j}(x,t)D_{j}u(x,t))+ c(x,t)u(x,t),
\]
$a_{i,j}$ are measurable,  $a_{i,j}= a_{j,i}$ and $c\in L^{\infty}(\Omega\times(0,\infty))$.  What is more, we assume that $L$ is uniformly elliptic and $c$ is nonpositive, i.e. there exist positive constants $\lambda_{1}, \lambda_{2}$ such, that
\eqq{
\lambda_{1}|\xi|^{2} \leq \sum_{i,j}^{N}a_{i,j}(x,t)\xi_{i}\xi_{j} \leq \lambda_{2}|\xi|^{2} \hd \textrm{ for } a.a  \ \ (x,t)\in  \Omega\times [0,\infty), \hd \forall \xi \in \mr^{N} \hd \m{ and } \hd c\leq 0 .
}{elipt}

In paper \cite{nasza} we proved the existence of weak and regular solutions to the problem (\ref{ba}) assuming only that  (\ref{aa}) holds. In particular, by  theorem~1 \cite{nasza} we have

\begin{theorem}\label{istnienie}
Assume that (\ref{aa}),(\ref{elipt})  hold and $u_{0}\in L^{2}(\Omega)$. Then there exists $u$ such that for any $T>0$
\[
u \in L^{2}(0,T;H^{1}_{0}(\Om)), \hd \hd  \int_{0}^{1}  I^{1-\al}[u-\uz] \mu(\al) d\al \in {}_{0}H^{1}(0,T;H^{-1}(\Om))
\]
and $u$ is a weak solution to the problem (\ref{ba}), i.e. for all $\vf \in H^{1}_{0}(\Om) $ and a.a. $t \in (0,T)$ function $u$ fulfills the equality
\[
\poch \izj  \int_{\Om} I^{1-\al}[u(x,t)-\uz(x)]\vf(x)dx \ma d\al
\]
\eqq{
+ \sum_{i,j=1}^{N} \int_{\Om}a_{i,j}(x,t)D_{j}u(x,t)D_{i}\vf(x,t)dx= \io c(x,t)u(x,t) \vp (x)dx  .}{slabadef}
\end{theorem}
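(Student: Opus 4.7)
I would prove Theorem~\ref{istnienie} via a Faedo--Galerkin scheme adapted to the distributed-order Caputo setting. Pick an $\ld$-orthonormal Hilbert basis $\{\vk\}_{k\geq 1}\subset\hjzo$, e.g.\ the Dirichlet eigenfunctions of $-\lap$ on $\Om$ (smooth in view of $\p\Om\in C^{2}$). For each $n\in\N$, look for an approximate solution $\un(x,t)=\sum_{k=1}^{n}\cnkt\vk(x)$ whose coefficients solve, for every $k\leq n$, the scalar fractional ODE obtained by testing the PDE against $\vk$, namely $\dm \cnkt + \sum_{j=1}^{n} B^{n}_{k,j}(t)\,c_{n,j}(t)=0$ with $\cnk(0)=(\uz,\vk)_{\ld}$. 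Here $B^{n}(t)$ is the Galerkin matrix of the bilinear form attached to $-L$; it is bounded and measurable in $t$ by (\ref{elipt}) and $c\in L^{\infty}$. To produce the $\cnkt$ I would apply $\im$ and invoke (\ref{fi2}) to recast the system as the Volterra equation $c^{n}(t)=c^{n}(0)-(g*B^{n}c^{n})(t)$; the pointwise bound (\ref{estig}) gives $g\in L^{1}_{loc}[0,\infty)$, so a weakly-singular Picard iteration produces a unique $c^{n}\in AC([0,T];\mr^{n})$ for every $T>0$.

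\textbf{Energy estimates and weak compactness.} Next, I would test the Galerkin equation by $\un$ in $\ld$ (multiply the $k$-th equation by $\cnkt$ and sum). Using the distributed-order chain-rule inequality $(\dm \un(t),\un(t))_{\ld}\geq \tfrac12\,\dm\|\un(t)\|_{\ld}^{2}$, which is the analogue for the representation (\ref{e1}) of the classical Alikhanov--Zacher inequality, together with ellipticity (\ref{elipt}) and $c\leq 0$, one gets $\tfrac12\dm\|\un\|_{\ld}^{2}+\lambda_{1}\|\nabla\un\|_{\ld}^{2}\leq 0$. Applying $\im$ (convolving with $g\geq 0$) and invoking (\ref{fi2}) yields $\|\un(t)\|_{\ld}\leq\|\uz\|_{\ld}$ for a.e.\ $t$ and $\im[\|\nabla\un\|_{\ld}^{2}](t)\leq C$; from the latter one extracts a uniform $L^{2}(0,T;\hjzo)$-bound by further convolution arguments in the spirit of \cite{nasza}. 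A subsequence converges weak-$\ast$ in $L^{\infty}(0,T;\ld)$ and weakly in $L^{2}(0,T;\hjzo)$ to some $u$, and the standard Galerkin machinery identifies $u$ as a weak solution of (\ref{slabadef}).

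\textbf{Main obstacle.} The technical heart is twofold. First, the distributed-order chain-rule inequality must be justified for merely absolutely continuous functions under (\ref{aa}): one starts from the pointwise Alikhanov--Zacher inequality $\da v^{2}(t)\leq 2v(t)\da v(t)$ valid for each $\al\in(0,1)$ and $v\in AC[0,T]$, multiplies by $\ma$ and integrates over $\al$, with Fubini legitimated by (\ref{aa}). Second, one must pass to the limit in the nonlocal time-derivative term $\poch \izj \ia[\un-\uz]\ma d\al$: I would extract a uniform bound on it in $L^{2}(0,T;\hmjo)$ directly from the Galerkin equation (whose right-hand side is uniformly bounded in $\hmjo$), exploit the vanishing of $\izj \ia[\un-\uz]\ma d\al$ at $t=0$ to obtain the claimed inclusion in ${}_{0}H^{1}(0,T;\hmjo)$, and finally use the linearity and continuity of the convolution operator $v\mapsto \izj \ia[v]\ma d\al$ on $\ldT$ to identify the weak limit and recover (\ref{slabadef}).
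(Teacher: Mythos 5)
The paper does not actually prove this theorem here: it is recalled verbatim as Theorem~1 of \cite{nasza}, and the only proof-level information available is the description in Section~4 of the approximate solutions $u^{n}=\sum_{k}c_{n,k}(t)\varphi_{k}$ built from Dirichlet eigenfunctions of $-\Delta$, time-mollified coefficients $a_{i,j}^{n}$, $c^{n}$, the inversion $\im=g\,\ast$ from Theorem~\ref{fint}, the distributed-order chain rule of Proposition~\ref{Awniosek}, and weak compactness --- which is exactly the Faedo--Galerkin scheme you outline, so your proposal follows essentially the same route. The one ingredient you omit that the authors use is the preliminary mollification of $a_{i,j}$ and $c$ in time (needed to get the pointwise regularity $t^{1-\gamma}u^{n}_{t}\in C$ of the approximants before passing to the limit), but this does not change the overall architecture of the argument.
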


\no Now we are ready to formulate the main theorem of this paper.

\begin{theorem} \label{main}
Assume that $\mu$ is nonnegative function satisfying (\ref{aa}), (\ref{f1}) and   $u$ is the weak solution to (\ref{ba}) given by theorem \ref{istnienie} for some initial value $u_{0}\in \ld$. Then, for a.a. $t$  function  $v(t):=\| u (\cdot, t)\|_{\ld}$ satisfies (\ref{f10})-(\ref{f14}).

\no Furthermore, if $\supp{\mu}\subseteq [\delta,1]$ for some positive $\delta$, then
\eqq{
\norm{u(\cdot,t)}_{L^{2}(\Om)} \leq c\norm{\uz}_{L^{2}(\Om)} t^{-\delta} \textrm{ \hd \hd for } a.a.  \hd \ \ t\geq 1
}{dd}
where $c$ depends only on  $\lambda_{1}$, $c_{\mu}$ and Poincare  constant.
\end{theorem}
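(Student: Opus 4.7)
The plan is to reduce the decay of $\nol{u\ct}$ to the scalar decay analysis of Proposition~\ref{decayode} via an energy identity and a comparison principle for the distributed-order ODE of Proposition~\ref{zwyczajne}; this follows the strategy of \cite{Zacher}, adapted to the framework of \cite{nasza} built on the resolvent~$g$.

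The first step is to derive a scalar fractional differential inequality for $y(t):=\nolk{u\ct}$. Testing the weak formulation~(\ref{slabadef}) with $\varphi=u\ct$---justified through Steklov averages in time, or within the Galerkin scheme used to construct $u$ in Theorem~\ref{istnienie}---one obtains, using the ellipticity condition~(\ref{elipt}) and the nonpositivity of~$c$,
\[
\iO u(x,t)\,\dm u(x,t)\,dx+\lambda_{1}\nolk{\nabla u\ct}\leq 0.
\]
A distributed-order Alikhanov-type inequality
\[
\iO u(x,t)\,\dm u(x,t)\,dx\geq \tfrac{1}{2}\,\dm\nolk{u\ct},
\]
obtained by integrating the pointwise Caputo inequality $uD^{\alpha}u\geq\tfrac{1}{2}D^{\alpha}u^{2}$ against $\ma\,d\alpha$, combined with the Poincar\'e inequality $\nolk{\nabla u\ct}\geq C_{P}\nolk{u\ct}$, then yields
\[
\dm y(t)+2\lambda_{1}C_{P}\,y(t)\leq 0,\qquad y(0)=\nolk{\uz}.
\]

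For the second step, let $w$ be the unique absolutely continuous solution to $\dm w+2\lambda_{1}C_{P}\,w=0$ with $w(0)=\nolk{\uz}$, furnished by Proposition~\ref{zwyczajne}. Setting $z:=y-w$, we have $z(0)\leq 0$ and $\dm z+2\lambda_{1}C_{P}\,z\leq 0$. Applying $\im$ and using the inversion identity~(\ref{fi2}) rewrites this as the Volterra inequality $z(t)+2\lambda_{1}C_{P}\,(g*z)(t)\leq 0$; since $g\geq 0$ by Theorem~\ref{fint}, a standard Gronwall-type argument for Volterra equations with a nonnegative kernel forces $z\leq 0$, hence $\nolk{u\ct}\leq w(t)$. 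Proposition~\ref{decayode}, applied to $w$ with $v_{0}=\nolk{\uz}$ and $\lambda=2\lambda_{1}C_{P}$, then transfers each of the rates (\ref{f10})--(\ref{f14}) to $y$ and, after taking square roots and renaming constants, to $\nol{u\ct}$. Under $\supp\mu\subseteq[\delta,1]$, the bound~(\ref{f15}) gives $w(t)\leq c\nolk{\uz}t^{-\delta}$, from which (\ref{dd}) follows; all constants depend only on $\lambda_{1}$, $c_{\mu}$ and the Poincar\'e constant.

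The main obstacle is the rigorous justification of Step~1. Theorem~\ref{istnienie} provides only $u\in L^{2}(0,T;\hjzo)$ and $\int_{0}^{1}I^{1-\alpha}[u-\uz]\ma d\alpha\in{}_{0}H^{1}(0,T;\hmjo)$, so neither $\dm u$ nor $\frac{d}{dt}\nolk{u\ct}$ exists classically, and the pointwise Alikhanov inequality cannot be applied directly to $u$. The remedy is to regularize $u$ in time by Steklov averages $u_{h}$, apply the pointwise Caputo inequality to $u_{h}$ for each $\alpha\in(0,1)$, integrate against $\ma d\alpha$, and pass to the limit $h\to 0^{+}$ using the $L^{2}_{t}H^{1}_{x}$-convergence $u_{h}\to u$ together with the strong convergence in $\hmjo$ of the nonlocal quantity $\int_{0}^{1}I^{1-\alpha}[u_{h}-\uz]\ma d\alpha$ supplied by Theorem~\ref{istnienie}. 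An analogous, but simpler, technicality in the scalar comparison of Step~2 is dispatched using the explicit nonnegative resolvent~$g$ of Theorem~\ref{fint}.
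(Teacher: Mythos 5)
Your overall strategy (energy inequality for a scalar quantity, comparison with the scalar ODE, then Proposition~\ref{decayode}) is the right one and matches the paper's, but the specific reduction you propose does not prove the stated rates. You derive the differential inequality for $y(t)=\nolk{u\ct}$ via $\iO u\,\dm u\,dx\geq\tfrac12\dm\nolk{u\ct}$, compare $y$ with the scalar solution $w$, and then take square roots. But Proposition~\ref{decayode} then controls the \emph{squared} norm, so after the square root you only get the square roots of the claimed rates: $(\ln t)^{-1/2}$ instead of $(\ln t)^{-1}$ in (\ref{f10}), and $t^{-\delta/2}$ instead of $t^{-\delta}$ in (\ref{dd}). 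This is precisely the loss the paper is engineered to avoid: Proposition~\ref{Awniosek} establishes the sharper inequality $\nlp{\wct}^{p-1}\dm\nlp{\wct}\leq\io\dm w\cdot|w|^{p-2}w\,dx$, i.e.\ a bound involving $\dm$ of the norm itself rather than of its $p$-th power, and the Galerkin approximation is perturbed to $u^{n}_{\ve}=u^{n}+\ve\vf_{n+1}$ exactly so that one may divide by $\norm{u^{n}_{\ve}\ct}_{L^{2}(\Om)}\geq\ve>0$ and obtain the linear inequality $\dm\norm{u^{n}_{\ve}\ct}_{\ld}+\frac{\lambda}{c_{p}^{2}}\norm{u^{n}_{\ve}\ct}_{\ld}\leq\frac{\lambda\ve}{c_{p}^{2}}$ before letting $\ve\to0$. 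Without this step your argument proves a strictly weaker theorem.

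Two further points. First, your comparison step is not rigorous as stated: from $z+\lambda(g*z)\leq0$ with $z(0)\leq0$, nonnegativity of $g$ alone does not let you invert $(I+\lambda g*)$ while preserving the sign (the second-kind resolvent enters with a minus sign), so "standard Gronwall with a nonnegative kernel" does not force $z\leq0$; one needs either complete positivity of the kernel or, as the paper does, the extremum principle of Lemma~\ref{luchko} applied at a hypothetical interior positive maximum of $f=\norm{u^{n}\ct}_{\ld}-v_{n}$ (which also requires the regularity $f\in W^{1,\infty}(\ka,T)$ supplied by (\ref{dp}) and (\ref{postacv})). Second, you correctly flag that testing the weak formulation with $u\ct$ and applying a pointwise Caputo inequality to $u$ itself is not justified by the regularity of Theorem~\ref{istnienie}, but the Steklov-average remedy you sketch is not carried out and is itself delicate; the paper avoids it entirely by running the whole argument on the Galerkin approximations $u^{n}$, which have the pointwise regularity (\ref{dp}) needed for Proposition~\ref{Awniosek}, and only at the end passes to the weak limit through $\int_{0}^{T}\eta(t)\nolk{\una{t}}\,dt\leq\int_{0}^{T}\eta(t)v^{2}(t)\,dt$ and lower semicontinuity of the norm.
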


It is worth to mention that the last result shows some similarity between long-time asymptotics of the solution to the problem with the distributed order Caputo derivative and the solution to corresponding problem with the multi-term fractional Caputo derivative defined by $\sum_{i=1}^{k}D^{\al_{i}}$, where $k$ is some positive finite number and $0< a_{1} < a_{2}\dots < a_{k} < 1$. The decay of solution to the last equation in case of time-independent elliptic operator was studied in  \cite{Y2}, \cite{Y3} and \cite{luchko} and there was shown that the solution decays as $t^{-a_{1}}$ (see also \cite{Lizanik}). Thus, our results states that if $\supp{ \mu} \in [\delta,1]$ then the solution to equation with the distributive order Caputo derivative decays as a solution to the multi-term fractional diffusion equation with $\al_{1} = \delta$.

The results presented in this paper can be generalized to the case $\mu$ of the following form
\[
\mu = h+ \sum_{k=1}^{K} a_{k} \delta_{\alpha_{k}},
\]
where $h$ is nonnegative integrable function on $(0,1)$ and $\delta_{\alpha_{k}}$ is Dirac delta function with support at $\alpha_{k}\in (0,1)$. We will address this issue in forthcoming paper.

\section{Solution to ODE}

In this section we present the proofs of proposition~\ref{zwyczajne}, theorem~\ref{tw1} and proposition~\ref{decayode}.

\begin{proof}[Proof of proposition~\ref{zwyczajne}. ]  By theorem~\ref{fint},  for absolutely continuous functions problem (\ref{ordinary3}) is equivalent to the following equation
\eqq{v(t)= v_{0}-\la g * v(t).}{h3}
In order to  solve (\ref{h3}) we apply to both sides operator $- \la g *$  and add $v_{0}$. Then we get
\[
v(t)= v_{0} + v_{0}(-\la)g* 1(t) + (-\la)^{2} g* g*v(t).
\]
Iterating this procedure we arrive at
\[
v(t)= v_{0}\sum_{k=0}^{n} (-\la)^{k}g^{[k]}*1(t) + (-\la)^{n}g^{[n]}*v(t).
\]
Thus, assuming that $v$ is bounded on $[0,T] $ we can pass to the limit with $n \rightarrow \infty$ and we obtain the representation (\ref{h2}) (see the proof of lemma~1 \cite{nasza} for details). Using the similar argument we deduce that the series \[
\sum_{k=0}^{\infty} (- \la )^{k} \ddt \left( g^{[k]}*1 (t) \right)
\]
converges absolutely in $L^{1}(0, T)$ for any $T>0$. Hence formula (\ref{h2}) defines absolutely continuous function on $[0,\infty)$ and by direct calculation we check that $v$ satisfies (\ref{h3}), which is equivalent to (\ref{ordinary3}).   Uniqueness follows by Gronwall-type lemma (see lemma~1, \cite{nasza}).
\end{proof}

\begin{proof}[Proof of theorem~\ref{tw1}. ] We divide the proof onto three steps.

\no \textsf{Step 1.} At the beginning we will find the candidate for the Laplace transform of solution to (\ref{ordinary3}). Let us assume that $v(t)$ is absolutely continuous and bounded solution to (\ref{ordinary3}). We apply the Laplace transform to both sides of this equation. Then we get $\vl(p)- \frac{v_{0}}{p}= - \lambda \vl (p) \cdot \gl(p)$ for $p$ such that $\Re p>0$, where by proposition~1 \cite{nasza} we have
\eqq{\gl (p) = \frac{1}{p\kl(p)} , \hd \hd   \kl(p)= \izj p^{\al-1} \mu (\al) d\al. }{e3}
Thus we get $\vl (p)\left(1+\frac{\lambda}{p \kl (p)}\right)= \frac{v_{0}}{p}$. By assumption $\lambda $ is positive, hence the expression in brackets do not vanish in the right half space, thus we obtain
\eqq{\vl(p)= v_{0} \frac{p\kl(p)}{p[p\kl(p)+\lambda] } , \hd \hd \Re p >0 .  }{e4}

\no Now, denote by $F(p) $ the right hand side of (\ref{e4}) for $v_{0}=1$. We will find an inverse Laplace transform of $F(p)$ given in a form, which is convenient in analysis of the decay rate. We will show that
\eqq{\vl (p) = v_{0}F(p) \hd \m{ for } \hd \Re p>0, \m{ \hd where } v \m{ is given by \hd } (\ref{postacv}). }{h5}
For this purpose we apply lemma~\ref{odwrotna} from appendix. We will verify that $F(p)$ satisfies  the assumptions of lemma~\ref{odwrotna}:

\no \textsf{Assumption $1$.} We choose the main branch of logarithm and for $r= |p|$, $\vp= \arg{p}$ we see that
\[
p\kl(p) = \izj p^{\al} \muad = \izj r^{\al} \cos(\al \vp ) \muad + i \izj r^{\al} \sin(\al \vp ) \muad
\]
is analytic in $\C\setminus (-\infty, 0]$. If $|\vp|\in (0, \pi)$, then $\Im p\kl(p) \not = 0$ and for $\vp=0$ we have $\Re{ p\kl(p)} >0$. Hence positivity of $\lambda$ implies that the denominator of $F$ do not vanish in $\C\setminus (-\infty, 0]$ and thus $F$ is analytic in $\C\setminus (-\infty, 0]$.

\no \textsf{Assumption $2$. } By direct calculation we obtain that $\lim\limits_{\vp \rightarrow \pi^{-}} F(te^{\pm i \vp }):=F^{\pm}(t)$ exists for each $t>0$ and then $F^{+} = \overline{F^{-}}$.

\no \textsf{Assumption $3a$. } From (24) \cite{nasza} we have
\eqq{
\left|  \izj p^{\al} \muad \right| \geq \tilde{c} |p|^{\gamma} \hd \m{ \hd for \hd } \hd  |p|\geq 1,
}{e5}
where constant $\tilde{c}$ depends only on $\mu$ and $\gamma$ comes from (\ref{defg}). Thus  we have
\eqq{
|F(p)| \leq \frac{1}{|p|} \frac{1}{\left| 1+\frac{\lambda}{\izj p^{\al} \muad} \right| }\leq \frac{2}{|p|} \hd \m{ \hd for } \hd |p| \geq \max\{1, \left(
 2\lambda/\tilde{c} \right)^{1/\gamma}\}
}{f4}
and $|F(p)| \longrightarrow 0$  as $|p|\rightarrow \infty$.

\no \textsf{Assumption $3b$. } It is enough to show that
\eqq{\lim_{|p|\rightarrow 0 } \izj p^{\al} \muad =0,}{e7}
uniformly on $|\arg(p)|<\pi$. For this purpose we fix $\ep >0$. Then there exists positive $a$ such that $\int_{0}^{a} \muad \leq \frac{\ep}{2}$. Then for $|p| \leq 1$ we have
\[
\left| \izj p^{\al} \muad \right| \leq \int_{0}^{a} \muad + \int_{a}^{1} |p|^{\al} \muad \leq \frac{\ep}{2}+ |p|^{a} \int_{a}^{1} \muad \leq \ep,
\]
if $|p|\leq \left( \frac{\ep}{2 c_{\mu}}\right)^{1/a}$ and (\ref{e7}) is proved. Denote by $\pla$ a positive number not greater than $1$, such that
\[
\left| \izj p^{\al} \muad \right| \leq \frac{\lambda}{2}, \hd \hd \m{ for \hd } p\in \cut, \hd \arg{p} \in (-\pi, \pi), \hd |p|\leq \pla.
\]
Then for $p$ as above we can estimate $|p||F(p)|$ as follows
\eqq{|p||F(p)|\leq \frac{ \left| \izj p^{\al} \muad \right|}{\left| \lambda + \izj p^{\al} \muad  \right|} \leq \frac{ \left| \izj p^{\al} \muad \right|}{\lambda -\left|  \izj p^{\al} \muad  \right|} \leq \frac{2}{\lambda} \left|  \izj p^{\al} \muad  \right|, }{e8}
and by (\ref{e7}) we may see that   $|p||F(p)| \longrightarrow 0$, if $|p| \rightarrow 0$.

\no \textsf{Assumption $4$. } We fix $\epz \in (0, \frac{\pi}{2})$ and denote $p=re^{\pm i \vp}$, where $\vp \in (\pi-\epz, \pi)$. Constant $\lambda$ is real and we have
\[
\left| \izj p^{\al} \muad + \lambda  \right|\geq \izj r^{\al} |\sin(\pm \vp \al ) | \muad \geq \int_{\gamma}^{1- \gamma} r^{\al} \sin(\vp \al ) \muad,
\]
where $\gamma$ comes from (\ref{defg}). We note that $\sin(\vp \al ) \geq \min\{\sin{(\pi- \epz)\gamma}, \sin{\pi \gamma}  \}$ for $\al \in (\gamma, 1- \gamma)$, thus we obtain
\[
\left| \izj p^{\al} \muad + \lambda  \right|\geq c_{0} \min\{r^{\gamma}, r^{1- \gamma} \},
\]
where constant $c_{0}$ depends only on $\mu$. Thus, we may notice that
\eqq{|F(p)| \leq c_{0}^{-1} \max\{r^{-\gamma}, r^{\gamma - 1} \} \izj r^{\al-1} \muad. }{f2}
We define the majorant for $F(p)$ by the formula
\eqq{a(r)= \left\{
\begin{array}{cll}
\frac{2}{\lambda} \izj r^{\al - 1} \muad, & \m{ \hd for  \hd } & r<\pla,  \\
c_{0}^{-1} \max\{r^{-\gamma}, r^{\gamma - 1} \} \izj r^{\al-1} \muad, & \m{ \hd for \hd   } & r\in [\pla,\max\{1,\left( \frac{2\lambda}{\tilde{c}} \right)^{1/\gamma}\} ].  \\
2r^{-1}, & \m{ \hd for \hd } & r>\max\{1,\left( \frac{2\lambda}{\tilde{c}} \right)^{1/\gamma}\}, \\
\end{array}
 \right.}{f3}
Using (\ref{f4}), (\ref{e8}) and (\ref{f2}) we obtain $|F(p)|\leq a(r)$, where $p=re^{\pm i \vp}$ and $\vp \in (\pi- \epz, \pi)$. We have to show that $\frac{a(r)}{1+r}$ belongs to $L^{1}(\rr_{+})$. Obviously, it holds if and only if $a(r) \in L^{1}(0, \pla)$, but we have
\[
\int_{0}^{\pla} a(r)dr = \frac{2}{\lambda} \izj \int_{0}^{\pla} r^{\al-1} dr \muad = \frac{2}{\lambda}\izj \pla^{\al} \frac{\mu(\al)}{\al} d\al= \frac{2}{\lambda} \tilde{\pla} \izj  \frac{\mu(\al)}{\al} d\al,
\]
where $\tilde{\pla}$ is some number which belongs to the interval with endpoints  $\pla $ and $1$. Using the  assumption (\ref{f1}) we deduce that $\frac{a(r)}{1+r} \in L^{1}(\rr_{+})$.

We have just shown that under assumption (\ref{aa}) and (\ref{f1}) function $F(p) = \frac{\izj p^{\al}\ma d\al}{p[\izj p^{\al}\ma d\al + \lambda]}$ satisfies assumptions of lemma \ref{odwrotna} from the appendix, thus the inverse transform of $F$ exists and $F$ is Laplace transform of the following function
\eqq{ \frac{1}{\pi}\izi e^{-rt} \Im (F^{-}(r))dr.}{at}
Direct calculations give us
\[
\Im (F^{-}(r)) = \Im \lim_{\vf\rightarrow \pi^{-}}\left[\frac{\izj r^{\al}e^{-i\vf\al}\ma d\al}{r e^{-i\vf}\left(\izj r^{\al}e^{-i\vf\al}\ma d\al + \lambda\right)}\right] =  \frac{\lambda}{r} G(r),
\]
where
\eqq{
G(r)= \frac{\izj r^{\al} \sin (\pi \al) \ma d\al}{(\lambda + \izj r^{\al} \cos (\pi \al) \ma d\al )^{2} + (\izj r^{\al} \sin (\pi \al) \ma d\al)^{2}}.
}{g1}
Thus we proved  (\ref{h5}).

\textsf{Step 2.} Now we shall show that the function $v$ given by formula (\ref{postacv}) is absolutely continuous on $[0,\infty)$. It is interesting, that for the proof of this property we do not use the assumption (\ref{f1}). Indeed, let us denote $\plk= \min\{ (\la/ 4 c_{\mu})^{1/\delta_{\la}},1\}$, where $\delta_{\la}\in (0,1)$ is small enough such that $\int_{0}^{\delta_{\la}} \muad \leq \frac{\la}{4}$. Then  for $r\in (0, \plk]$ we may write
\[
\left|  \izj r^{\al} \cos(\pi \al ) \muad \right|\leq
\int_{0}^{\delta_{\la}} \muad + \int_{\delta_{\la}}^{1} r^{\al} \muad
\]
\[
\leq \frac{\la}{4} + r^{\delta_{\la}}\int_{\delta_{\la}}^{1} \muad \leq \frac{\la}{4}+ \plk^{\delta_{\la}}c_{\mu} \leq \frac{\la}{2}.
\]
Thus we obtain
\eqq{G(r) \leq \left( \frac{2}{\la}  \right)^{2} \izj \ral \sin(\pi \al ) \muad  \hd \m{ for } \hd r\in (0, \plk]. }{g3}
Next, if $r>\plk$, then we have
\[
\izj \ral \sal \muad \geq \ing \ral \sal \muad \geq  \sin(\pi \gamma)\ing \ral \muad
\]
\[
\geq \sin(\pi \gamma) \min\{ r^{1- \gamma}, r^{\gamma}\} \ing \muad = c_{1}\min\{ r^{1- \gamma}, r^{\gamma}\},
\]
where $\gamma$ comes from (\ref{defg}) and $c_{1}$ depends only on $\mu$. Thus we obtain the estimate
\eqq{G(r) \leq c_{1}^{-1} \max\{ r^{ \gamma-1}, r^{-\gamma}\}, \hd \m{ for } \hd r>\plk.   }{g4}
We shall prove that
\eqq{\frac{1}{r}G(r) \in L^{1}(0,\infty). }{grr}
Indeed, we may write
\[
\izi \frac{1}{r}G(r) dr =  \left(
\int_{0}^{\plk}  + \int_{\plk}^{1}  + \int_{1}^{\infty} \right) \frac{1}{r} G(r) dr.
\]
Using  (\ref{g3})  we obtain the estimate
\[
\int_{0}^{\plk}\frac{1}{r} G(r) dr \leq \left( \frac{2}{\la} \right)^{2} \int_{0}^{\plk} \izj r^{\al-1}\sal \muad dr
\]
\[
 = \left( \frac{2}{\la} \right)^{2}  \izj \plk^{\al} \frac{\sal}{\al} \mu(\al) d\al
\leq \pi\left( \frac{2}{\la} \right)^{2}  \izj  \mu(\al) d\al <\infty.
\]
Applying (\ref{g4}) we get
\[
\int_{\plk}^{1} \frac{1}{r}G(r) dr \leq c_{1}^{-1}\int_{\plk}^{1} r^{\gamma-2} dr \leq \frac{c_{1}^{-1}}{1- \gamma} \plk^{\gamma-1}<\infty,
\]
and
\[
\int_{1}^{\infty} \frac{1}{r} G(r)dr \leq c_{1}^{-1} \int_{1}^{\infty} r^{-\gamma- 1} dr = \frac{c_{1}^{-1}}{\gamma}<\infty,
\]
and we obtained (\ref{grr}). Then we may apply Lebesgue monotone theorem and conclude that  $v \in C[0,\infty)$ and obviously $v \in C^{\infty}(0,\infty)$. What is more we may differentiate under the integral sign and we get
\[
v'(t)= -v_{0}\frac{\la}{\pi} \izi e^{-rt}G(r)dr
\]
and by Fubini theorem
\[
\izi |v'(t)|dt = |v_{0}|\frac{\la}{\pi} \izi \izi e^{-rt} G(r) dt dr = |v_{0}|\frac{\la}{\pi}
\int_{0}^{\infty}  \frac{1}{r} G(r) dr <\infty.
\]
Thus $v$ given by formula (\ref{postacv}) is absolutely continuous on $[0, \infty)$, provided (\ref{aa}) holds.

\textsf{Step 3. } It remains to show that $v(t)$ satisfies (\ref{ordinary3}). We will do it rigorously, but for this purpose we need the assumption (\ref{f1}). We emphasize that we can not directly verify (\ref{ordinary3}). We will show that (\ref{ordinary3}) holds by applying property (\ref{h5}), which were obtained under the assumption (\ref{f1}).

We begin with the initial condition. We have to show that
\eqq{\frac{\lambda}{\pi}\izi  \frac{\izj r^{\al} \sin (\pi \al) \ma d\al}{(\lambda + \izj r^{\al} \cos (\pi \al) \ma d\al )^{2} + (\izj r^{\al} \sin (\pi \al) \ma d\al)^{2}} \frac{dr}{r}=1,}{f9}
where $\la $ is arbitrary positive number and  $\mu $ satisfies (\ref{aa}), (\ref{f1}). For this purpose we  use  formula for the inverse Laplace transform
\[
v(t)= \jpi \liN \idN e^{pt} v_{0} F(p) dp,
\]
where $\delta$ is positive and we will pass to the  limit $t\rightarrow 0$. We may write
\[
v(t)= \jpi \liN \idN e^{pt } \frac{v_{0}}{p} dp - \frac{ \la v_{0}}{2\pi i } \liN \idN e^{pt} \frac{1}{p[p \kl(p)+\la]} dp,
\]
and the first expression is equal to $v_{0}$, because for the  Laplace transform we have \m{$\mathcal{L}[1](p)=\frac{1}{p}$}.  It remains to show that  the second expression vanishes at $t=0$. Indeed, we note that
\[
\liN \idN e^{pt} \frac{1}{p[p \kl(p)+\la]} dp=  \idi (e^{pt}-1) \frac{1}{p[p \kl(p)+\la]} dp+  \idi  \frac{1}{p[p \kl(p)+\la]} dp\equiv A(t)+B.
\]
We may notice that both integrals converge absolutely.  Indeed, for $p=\delta + i s$ and $\gamma$ from (\ref{defg}) we may estimate from below
\[
|p[p\kl (p)+\la]|\geq \izj |\delta^{2} + s^{2}|^{\frac{\al+1}{2}} \sin\left(\al \arctan{\frac{|s|}{\delta}} \right) \muad
\]
\[
\geq \ing |\delta^{2} + s^{2}|^{\frac{\al+1}{2}} \sin\left(\al \arctan{\frac{|s|}{\delta}} \right) \muad
\]
and for $|s|\geq \max\{1, \delta \}$ we arrive at the following estimate
\[
|p[p\kl (p)+\la]|\geq  |s|^{\gamma+1} \sin(\frac{\pi}{4}\gamma) \ing \muad.
\]
Hence $A(t)$ and $B$ converge absolutely. Next, applying Lebesgue dominated convergence theorem we deduce that $A(0)=0$.

\no In order to show that $B=0$ we apply Cauchy theorem and we obtain that
\eqq{\idN \frac{1}{p[p\kl(p)+\la ]}dp = \int_{\Gamma_{N} } \frac{1}{p[p\kl(p)+\la ]}dp , }{f6}
where $\Gamma_{N}= \{\delta +  Ne^{i\vp}: \hd \vp \in (- \frac{\pi}{2}, \frac{\pi}{2}) \}$. We will estimate the last integral
\[
\left|\int_{\Gamma_{N} } \frac{1}{p[p\kl(p)+\la ]}dp \right| \leq \int_{- \frac{\pi}{2}}^{\frac{\pi}{2}} \frac{1}{|(\delta+ Ne^{i\vp})\kl (\delta+ Ne^{i\vp}) + \la  |} d \vp
\]
We may calculate that
\[
|(\delta+ Ne^{i\vp})\kl (\delta+ Ne^{i\vp}) + \la  |^{2}
  \]
\[
 =\left[ \izj |\delta^{2}+ 2\delta N \cos{\vp} + N^{2}|^{\frac{\alpha}{2}}   \cos\left( \al \arctan\left( \frac{N \sin\vp}{N\cos{\vp}+\delta}  \right)\right) \muad  +\la \right]^{2}
\]
\[
 +\left[ \izj |\delta^{2}+ 2\delta N \cos{\vp} + N^{2}|^{\frac{\alpha}{2}}   \sin\left( \al \arctan\left( \frac{N | \sin\vp|}{N\cos{\vp}+\delta}  \right)\right) \muad   \right]^{2}\equiv B^{2}_{1}(\vp)+ B^{2}_{2}(\vp).
\]
For $|\vp | \in (\frac{\pi}{4}, \frac{\pi}{2})$ we can estimate $B_{2}(\vp )$ as follows
\[
B_{2}(\vp ) \geq \ing |\delta^{2}+ 2\delta N \cos{\vp} + N^{2}|^{\frac{\alpha}{2}}   \sin\left( \al \arctan\left( \frac{N | \sin\vp|}{N\cos{\vp}+\delta}  \right)\right) \muad
\]
\[
\geq \ing |\delta^{2}+ 2\delta N \cos{\vp} + N^{2}|^{\frac{\alpha}{2}}   \sin\left( \gamma \arctan\left( \frac{N}{N+\sqrt{2}\delta}  \right)\right) \muad
\]
\[
\geq N^{\gamma}    \sin\left( \gamma \arctan\left( \frac{N}{N+\sqrt{2}\delta}  \right)\right) \ing\muad.
\]
Hence for  $|\vp | \in (\frac{\pi}{4}, \frac{\pi}{2})$ and $N \geq \max\{ 1, \frac{\sqrt{2} \delta}{ \sqrt{3}-1} \}  $ we have
\eqq{B_{2}(\vp) \geq N^{\gamma} \sin\left(\frac{\pi}{6}\gamma \right) \ing \muad.}{f7}

\no Similarly, for $\vp  \in [-\frac{\pi}{4}, \frac{\pi}{4}]$ and $B_{1}(\vp )$ we obtain
\[
B_{1}(\vp ) \geq \ing |\delta^{2}+ 2\delta N \cos{\vp} + N^{2}|^{\frac{\alpha}{2}}   \cos\left( \al \arctan\left( \frac{N  \sin\vp}{N\cos{\vp}+\delta}  \right)\right) \muad
\]
\[
\geq \ing |\delta^{2}+ 2\delta N \cos{\vp} + N^{2}|^{\frac{\alpha}{2}}   \cos\left( (1-\gamma) \arctan\left( \frac{N}{N+\sqrt{2}\delta}  \right)\right) \muad
\]
\[
\geq N^{\gamma}    \cos\left( (1-\gamma) \arctan\left( \frac{N}{N+\sqrt{2}\delta}  \right)\right) \ing\muad.
\]
Hence for  $\vp  \in [-\frac{\pi}{4}, \frac{\pi}{4}]$ and $N \geq 1 $ we have
\eqq{B_{1}(\vp) \geq N^{\gamma} \cos\left((1-\gamma)\frac{\pi}{4} \right) \ing \muad.}{f8}
Therefore from (\ref{f7}) and (\ref{f8}) we deduce that
\[
|(\delta+ Ne^{i\vp})\kl (\delta+ Ne^{i\vp}) + \la  | \geq c(\mu) N^{\gamma},
\]
for $N $ large enough and taking the limit $N\rightarrow \infty$ in (\ref{f6}) we obtain that $B=0$. Thus we proved that $v$ given by formula (\ref{postacv}) satisfies initial condition $v(0)=v_{0}$ and in particular we calculated the integral (\ref{f9}).

Now we shall show that $v$ satisfies equation (\ref{ordinary3})${}_{1}$. We denote by $w(t)= \dm v(t)+ \la v(t)$. We already proved that $v$ is absolutely continuous, hence $w$ belongs to $L^{1}_{loc}[0, \infty)$. Using  the fractional integration operator $\im = g*$ and applying theorem~\ref{fint} we obtain
\[
g* w(t)= v(t)-v_{0}+ \la g* v(t).
\]
The Laplace transform of right-hand side is equal to zero, because  (\ref{e3}) and (\ref{e4}) hold. Then $g*w(t)=0$ for a.a. $t>0$. We will show directly that $w\equiv 0$.  Indeed, if we denote by $\eta_{\ep}(t)= \ep^{-1}\chi_{(0,\ep)}(t)$, then $\eta_{\ep}* g* w(t)=0$ for a.a. $t>0$ and $\eta_{\ep}*w$ is in $L^{\infty}(0,T)$ for each $T>0$. Then we may apply theorem~\ref{fint}  and we have $\dm g* \eta_{\ep}* w(t)=\eta_{\ep}* w(t)=0$ for a.a. $t\in (0,T)$. On the other hand, from Lebesgue differentiation theorem we have $\eta_{\ep}* w(t) \rightarrow w(t)$, a.e. as $\ep\rightarrow 0$ and we deduce that  $w\equiv0$.
\end{proof}

\no Now, we are ready to show the decay estimates for solution to (\ref{ordinary3}), that is the proof of proposition~\ref{decayode}.

\begin{proof}[Proof of proposition~\ref{decayode}]
From theorem~\ref{tw1}, using notation (\ref{g1})  we obtain following formula for solution to equation (\ref{ordinary3})
\eqq{
v(t)= v_{0}\frac{\la}{\pi} \izi e^{-rt} G(r)\drr.
}{ff1}
Thus using (\ref{g3}) we may estimate the absolute value of $v$ by
\[
|v(t)| \leq |v_{0}| \frac{4}{\pi \la } \int^{\plk}_{0} e^{-rt} \izj r^{\al} \sia \muad \drr + |v_{0}| \frac{\la}{\pi} \int_{\plk}^{\infty} e^{-rt} G(r) \drr .
\]
The last integral decays exponentially, because due to (\ref{g4}) we obtain
\[
\int_{\plk}^{\infty} e^{-rt} G(r) \drr \leq c_{1}^{-1} \plk^{-\gamma-1} \frac{e^{-\plk t}}{t}.
\]
Thus it is enough to estimate
\eqq{A:=\int^{\plk}_{0} e^{-rt} \izj r^{\al} \sia \muad \drr.}{f16}
Applying Fubini theorem and substituting $s=rt$ we obtain the equality
\[
A= \izj t^{-\al} \int_{0}^{\plk t } e^{-s} s^{\la -1 } ds \sia \muad \leq \izj t^{-\al} \Gamma(\al) \sia \muad = k(t),
\]
where $k$ was defined in (\ref{e2}). Using (\ref{f1}) we get
\[
k(t) \leq \max_{\al \in [0,1]}{\al t^{-\al}} \cdot \izj \frac{\ma}{\al} d\al= \frac{\cmk}{e\ln{t}}, \hd \m{ for } \hd t>e,
\]
thus we proved (\ref{f10}).

\no Now assume that $\vs \in (0,1)$. If  $\ma \leq a \al^{\ka}$ a.e. on $(0,\vs)$, then substituting $s=\alpha \ln t$  for $t\geq 0$ we get
\[
k(t) \leq \int_{0}^{\vs} t^{-\alpha} \muad +\int_{\vs}^{1} t^{-\alpha} \muad
\leq a \izj t^{-\al} \al^{k} d\al + c_{\mu}t^{-\vs}
\]
\[
= \frac{a}{(\ln{t})^{\ka+1}} \int_{0}^{\ln{t}} e^{-s} s^{\ka} ds  + c_{\mu}t^{-\vs} \leq a\frac{\Gamma(\ka+1)}{(\ln{t})^{\ka+1}} + c_{\mu}t^{-\vs},  \hd \m{ for } \hd t>1,
\]
hence we obtain (\ref{f11}).

\no Analogously,  in the case (\ref{f12}) we take $q\in (0,1 )$ and we have
\[
k(t) \leq a\izj \tal \al^{\ka} e^{-\frac{\beta}{\al^{m}}} d\al  + c_{\mu}t^{-\vs}.
\]
To estimate the first term we write
\[
a\izj t^{-q\al}  e^{-\frac{\beta}{\al^{m}}} \cdot t^{-(1-q)\al}\al^{\ka} d\al
\leq a \max_{\al\in [0,1]}  t^{-q\al}  e^{-\frac{\beta}{\al^{m}}} \izj t^{-(1-q)\al}\al^{\ka} d\al.
\]
Proceeding as in the previous case we get
\[
\izj t^{-(1-q)\al}\al^{\ka} d\al \leq \frac{\Gamma(\ka+1)}{(1-q)^{\ka+1}(\ln{t})^{\ka+1}}.
\]
By direct calculation we obtain
\[
\max_{\al\in [0,1]}  t^{-q\al}  e^{-\frac{\beta}{\al^{m}}}  = \exp\left( - m^{\frac{1}{m+1}} (1+\frac{1}{m}) (q^{m}\beta)^{\frac{1}{m+1}}(\ln{t})^{\frac{m}{m+1}}   \right) \hd \m{ for } \hd t>e^{\frac{\beta m }{q}},
\]
hence (\ref{f13}) is proved.

\no Now assume that $\ma \leq a \exp(-\exp( 1/ \al  ))$ a.e. on $(0,\vs)$. Then we have
\[
k(t) \leq a\izj \tal \exp(-\exp(1/\al)) d\al+   c_{\mu}t^{-\vs}.
\]
To estimate the integral we take  $b\in (0,1)$
\[
 \left( \int_{0}^{b} + \int_{b}^{1} \right)\tal \exp(-\exp(1/\al)) d\al
\leq  \exp(-\exp(1/b)) \int_{0}^{b} \tal  d\al  + \int^{1}_{b} \tal  d\al.
\]
For $t>e^{e}$ we set  $b=(\ln{\ln{t}})^{-1}$ and we arrive at
\[
k(t) \leq at^{-\frac{1}{\ln{\ln{t}}}}+c_{\mu}t^{-\vs},
\]
thus we obtained (\ref{f14}).

\no In order to show (\ref{f15}) we first notice that if $\supp \mu \subseteq [\delta,1]$, then
\[
k(t) = \int_{\delta}^{1}\frac{t^{-\al}}{\Gamma(1-\al)}\ma d\al \leq c_{\mu} t^{-\delta} \hd \m{ \hd for \hd  } t \geq 1.
\]
It remains to show that polynomial decay of $v$ with ratio  $t^{-\delta}$ implies that  $\supp \mu \subseteq [\delta,1]$. We  suppose contrary. Then there exist $0< a < b < \delta$ such that $\int_{a}^{b}\ma d\al > 0.$ We will estimate from below the absolute value of $v$. Function $G(r)$ is positive, hence
\[
|v(t)| \geq \frac{|v_{0}|}{\lambda \pi}  \izj \frac{e^{-rt}}{r} G(r)dr.
\]
To estimate $G(r)$ we  first notice that for $r \leq 1$
\[
\izj r^{\al} \sin (\pi \al) \ma d\al \geq \int_{a}^{b} r^{\al} \sin (\pi \al) \ma d\al \geq  c r^{b},
\]
where $c$ depends only on $a$, $b$ and $c_{\mu}$. Furthermore, for $r\leq 1$
\[
\left(\lambda + \izj r^{\al}\cos (\pi\al)\ma d\al\right)^{2} + \left(\izj r^{\al}\sin (\pi\al)\ma d\al\right)^{2} \leq (\lambda + c_{\mu})^{2} + c_{\mu}^{2}.
\]
Thus,  having in mind that $G$ was defined by the formula (\ref{g1}) we obtain that $G(r) \geq c_{1}r^{b}$ for $r\in (0,1)$. Thus,   for some positive $C$ and $t>1$ we have
\[
|v(t)| \geq C \izj e^{-rt} r^{b-1}dr =C t^{-b}\izt e^{-s} s^{b-1}ds \geq C t^{-b}\izj  e^{-s}s^{b-1}ds .
\]
Then if $|v(t)| \leq c_{0}t^{-\delta}$ for some $c_{0}$ and $t\geq 1$, we get a contradiction because $b<\delta$.
\end{proof}

\section{Proof of the main result}

In this section we present a proof of theorem~\ref{main}.

\begin{proof}[Proof of theorem~\ref{main}]
We shall obtain the additional estimate for the decay of  the sequence of approximated solutions of (\ref{ba}) obtained in \cite{nasza}. Namely, let
\[
\un (x, t) = \sun \cnk (t) \vmx,
\]
be approximated solution constructed in the proof of theorem~1 in \cite{nasza}. The coefficients $\cnk$ are uniquely determined by the system (36) in \cite{nasza}, which in the case of problem (\ref{ba}) reduces to
\[
 \int_{\Om} \dm \un (x,t) \cdot  \vf_{m}(x)dx
\]
\eqq{= -\sum_{i,j=1}^{N}\int_{\Om} a^{n}_{i,j}(x,t)D_{j} \un (x,t) \cdot D_{i}\vf_{m}(x)dx +\io c^{n}(x,t)u^{n}(x,t)\vm(x)dx.
}{de}
Here $\{\vf_{n}\}_{n=1}^{\infty}$ form an orthonormal basis of $L^{2}(\Om) $ and are defined as eigenfunctions of the Laplace operator
\begin{equation}\label{vf}
 \left\{ \begin{array}{rlll}
-\Delta \vf_{n} &=& \lambda_{n}\vf_{n} & \textrm{ in } \hd  \Om \\
 {\vf_{n}}_{| \p \Om } &=& 0. &  \\
\end{array} \right. \end{equation}
Furthermore, $a^{n}_{i,j}$ and $c^{n}$  are defined as follows: let  $\eta_{\ve}= \eta_{\ve}(t)$ be a standard smoothing kernel with the support in $[-\ve,\ve]$ and we set
\[
a_{i,j}^{n}(x,t) = \eta_{\frac{1}{n}}(\cdot)\overline{*}a_{i,j}(x,\cdot)(t), \hd \hd c^{n}(x,t)= \eta_{\frac{1}{n}}(\cdot)\overline{*}c(x,\cdot)(t),
\]
where $\overline{*}$ denotes convolution on real line and we extended  $a_{i,j}(x,t)$ by even reflection for $t <0$, but  $c$ we extend by zero for $t<0$. Now we fix $T$ and we shall prove estimate for $u^{n}$ with constant independent on $T$. First we notice that  $a_{i,j}^{n}\rightarrow a_{i,j}$, \hd $c^{n} \rightarrow c$ in $L^{2}(\Om^{T})$ and
\eqq{
\lambda|\xi|^{2} \leq \sum_{i,j}^{N}a_{i,j}^{n}(x,t)\xi_{i}\xi_{j} \leq \Lambda|\xi|^{2} \textrm{  } \hd \forall t \in [0,T], \hd \forall \xi \in \mr^{N}, \m{ a.a. } x\in \Omega, \hd \m{ and } \hd c^{n} \leq 0.
}{elipn}
We recall that in \cite{nasza} we constructed approximate solutions $u^{n}$ in such a way that
\eqq{ u^{n}(x,\cdot) \in C([0,T]), \hd  \ t^{1-\gamma}\un_{t}(x, \cdot) \in C([0,T]) \textrm{ for each } x\in \overline{\Omega},
}{dp}
where $\gamma$ is from (\ref{defg}). In order to obtain necessary estimates we introduce $u^{n}_{\ve}(x,t) = u^{n}(x,t) + \ve \vf_{n+1}(x)$, where $\ve > 0$ (see the proof of theorem~1.1 in \cite{Zacher}). Then for all $t \geq 0$ we have $\norm{u^{n}_{\ve}(\cdot,t)}_{L^{2}(\Om)} \geq \ve > 0.$
We multiply (\ref{de}) by $c_{n,k}$ and sum it up for $m=1,\dots,n$. Then using $\dm u^{n}_{\ve}  =\dm u^{n} $ we get
\[
\int_{\Om} \dm u^{n}_{\ve} (x,t) \cdot u^{n}(x,t)dx =
\]
\eqq{
 -\sum_{i,j=1}^{N}\int_{\Om} a^{n}_{i,j}(x,t)D_{j} \un (x,t) \cdot D_{i}u^{n}(x,t)dx + \io c^{n}(x,t)|u^{n}(x,t)|^{2}dx.
}{df}
From the definition of $u^{n}_{\ve}$ and orthogonality of $\{\vf_{n}\}_{n=1}^{\infty}$ we obtain following equality
\[
\int_{\Om} \dm u^{n}_{\ve} (x,t) \cdot u^{n}(x,t)dx
= \int_{\Om} \dm u^{n}_{\ve} (x,t) \cdot u^{n}_{\ve}(x,t)dx
\]
Using this equality and   (\ref{elipn}) we get
\eqq{
\int_{\Om} \dm u^{n}_{\ve} (x,t) \cdot u^{n}_{\ve}(x,t)dx + \lambda \norm{D u^{n}(\cdot,t)}^{2}_{L^{2}(\Om)} \leq 0.
}{dg}
Function $u^{n}_{\ve}$ satisfies assumptions of  proposition~\ref{Awniosek} in the appendix, hence making use of inequality (\ref{a1})  and denoting Poincare constant by $c_{p}$ we obtain
\eqq{\norm{u^{n}_{\ve}(\cdot,t)}_{L^{2}(\Om)}\dm \norm{u^{n}_{\ve}(\cdot,t)}_{L^{2}(\Om)} + \frac{\lambda}{c_{p}^{2}} \norm{u^{n}(\cdot,t)}_{L^{2}(\Om)}^{2} \leq 0 }{dh}
Using again the definition of $u^{n}_{\ve}$ we may write that
\eqq{\norm{u^{n}_{\ve}(\cdot,t)}_{L^{2}(\Om)}\dm \norm{u^{n}_{\ve}(\cdot,t)}_{L^{2}(\Om)} + \frac{\lambda}{c_{p}^{2}} \norm{u^{n}_{\ve}(\cdot,t)}_{L^{2}(\Om)}^{2} \leq \frac{\lambda \ve^{2}}{c_{p}^{2}} }{di}
Having in mind that $\norm{u^{n}_{\ve}(\cdot,t)}_{L^{2}(\Om)} \geq \ve > 0$ we may divide by $\norm{u^{n}_{\ve}(\cdot,t)}_{L^{2}(\Om)}$ and we get that
\eqq{\dm \norm{u^{n}_{\ve}(\cdot,t)}_{L^{2}(\Om)} + \frac{\lambda}{c_{p}^{2}} \norm{u^{n}_{\ve}(\cdot,t)}_{L^{2}(\Om)} \leq \frac{\lambda \ve}{c_{p}^{2}} }{dii}

\no We will pass to the limit with $\ve$. From the definition of $u^{n}_{\ve}$ we have
\eqq{
\norm{u^{n}_{\ve}(\cdot,t)}_{L^{2}(\Om)} \longrightarrow \norm{u^{n}(\cdot,t)}_{L^{2}(\Om)}, \textrm{ as \hd  } \ve \rightarrow 0,}{zbl2}
for every $t\in  [0,T]$. We will show that for every $t \in [0,T]$
\eqq{
\dm \norm{u^{n}_{\ve}(\cdot,t)}_{L^{2}(\Om)} \rightarrow \dm \norm{u^{n}(\cdot,t)}_{L^{2}(\Om)} \textrm{ as } \ve \rightarrow 0.}{dl}
Indeed, we set
\[
T_{1} = \{ t: \norm{u^{n}(t)}_{L^{2}(\Om)} > 0\}\textrm{ \hd and \hd  }T_{2} = [0,T] \setminus T_{1}.
\]
We note that $\norm{u^{n}(\cdot,t)}_{L^{2}(\Om)} \in AC[0,T]$ as superposition of Lipschitz function with absolutely continuous function. Thus, we can calculate its derivative as follows
\eqq{
\poch \norm{u^{n}(\cdot,t)}_{L^{2}(\Om)} =  \left\{ \begin{array}{ll}
\frac{\iO u^{n}(x,t) u^{n}_{t}(x,t)dx}{\norm{u^{n}(\cdot,t)}_{L^{2}(\Om)}} & \textrm{for }a.a. \ \ t \in T_{1} \\
0 & \textrm{for }a.a. \ \ t \in T_{2}.\\
\end{array} \right.
}{do}
Using the orthogonality of $\{\vm\}$ in $\ld$ and definition of $T_{2}$ we may write
\[
\dm \norm{u^{n}_{\ve}(\cdot,t)}_{L^{2}(\Om)} = \izj \mg \int_{T_{1}} (t-\tau)^{-\al}\frac{\iO u^{n}(x,\tau) u^{n}_{t}(x,\tau)dx}{\left(\ep^{2}+\iO \abs{u^{n}(x,\tau)}^{2}dx\right)^{\frac{1}{2}}}d\tau d\al \equiv I.
\]
To take the limit under integral sing we need the estimate. By Schwarz inequality and property (\ref{dp}) we have
\[
I \leq \izj \mg  \int_{T_{1}} (t-\tau)^{-\al} \left(\iO\abs{u^{n}_{t}(x,\tau)}^{2}dx\right)^{\frac{1}{2}}d\tau d\al
\]
\[
 \leq |\Om| \norm{t^{1-\gamma}u^{n}_{t}(x,t)}_{L^{\infty}(\Om^{T})}\izj \mg \izt (t-\tau)^{-\al} \tau^{\gamma-1} d\tau d\al
  \]
  \[
  = |\Om| \norm{t^{1-\gamma}u^{n}_{t}(x,t)}_{L^{\infty}(\Om^{T})} \izj t^{\gamma-\al} \frac{\Gamma(\gamma)}{\Gamma(1+\gamma-\al)} \ma d\al
   \]
   \[
   \leq 2 |\Om| \ggg  c_{\mu} \norm{t^{1-\gamma}u^{n}_{t}(x,t)}_{L^{\infty}(\Om^{T})}\max\{t^{\gamma},t^{\gamma-1}\}.
\]
Thus from Lebesgue dominated theorem we obtain that as $\ve$ tends to zero
\[
 \forall t \in [0,T] \hd \hd  \dm  \norm{u^{n}_{\ve}(\cdot,t)}_{L^{2}(\Om)} \longrightarrow \izj \mg \int_{T_{1}} (t-\tau)^{-\al}\frac{\iO u^{n}(x,\tau) u^{n}_{t}(x,\tau)dx}{\left(\iO \abs{u^{n}(x,\tau)}^{2}dx\right)^{\frac{1}{2}}}d\tau d\al
\]
\eqq{
 =\izj \mg \int_{T} (t-\tau)^{-\al} \frac{d}{d\tau} \| u^{n}(\cdot, \tau)\|_{\ld}  d\tau d\al = \dm  \norm{u^{n}(\cdot,t)}_{L^{2}(\Om)},
}{dj}
where in the first equality we applied (\ref{do}). And that way we proved (\ref{dl}). Thus, using (\ref{zbl2}) and (\ref{dl}) in (\ref{dii}) we obtain the following inequality
\eqq{
\dm \norm{u^{n}(\cdot,t)}_{L^{2}(\Om)} + \frac{\lambda}{c_{p}^{2}} \norm{u^{n}(\cdot,t)}_{L^{2}(\Om)} \leq 0, \hd \hd t\in [0,T].
}{dm}
From theorem~\ref{tw1} we know that there exists uniquely defined absolutely continuous solution to the problem
\eqq{\dm v_{n}(t)+ \frac{\lambda}{c_{p}^{2}} v_{n}(t)=0, \hd \hd v_{n}(0)=\norm{u^{n}(\cdot,0)}_{L^{2}(\Om)}.}{bc}
We set $f(t):=\norm{u^{n}(\cdot,t)}_{L^{2}(\Om)}- v_{n}(t)$. Then
\eqq{\dm f(t)+ \frac{\lambda}{c_{p}^{2}} f(t)\leq 0, \hd \hd t\in [0,T], \hd \hd f(0)=0 .}{f19}
We shall show that $f$ is non-positive, i.e.
\eqq{\norm{u^{n}(\cdot,t)}_{L^{2}(\Om)}\leq v_{n}(t), \hd \hd \m{ for } t\in [0,T].}{f20}
Indeed, in opposite case there exists $t_{0}\in (0,T]$ such that $f(t_{0})= \max\limits_{t\in [0,T]}f(t)>0$. Then from (\ref{postacv}) and (\ref{dp}) we deduce that $f\in W^{1, \infty}(\ka , T)$ for each $\ka >0$. Thus by lemma~\ref{luchko} in the appendix  we have $(\dm f)(t_{0})\geq 0$, which is a contradiction with (\ref{f19}) and we obtained (\ref{f20}). By Bessel inequality and (\ref{postacv}) we have $v_{n}\leq v$, where $v$ is a solution to the problem
\eqq{\dm v(t)+ \frac{\lambda}{c_{p}^{2}} v(t)=0, \hd \hd v(0)=\norm{u_{0}}_{L^{2}(\Om)}.}{bcc}
Further, if $\eta\in C^{\infty}_{0}(0,T)$ is non-negative, then from (\ref{f20}) we have
\eqq{\int_{0}^{T}  \eta(t)\| u^{n}(\cdot, t)\|_{\ld}^{2} dt \leq \int_{0}^{T} \eta(t) v^{2}(t) dt.   }{f21}
There exists a subsequence $\{u^{n} \}$ (still indexed by n) such, that $u^{n}\rightharpoonup u$ in $L^{2}(0,T;H^{1}_{0}(\Omega))$, where $u$ is the unique solution to (\ref{ba}) given by theorem~\ref{istnienie} (see (46) in \cite{nasza}). Thus by weak lower semi-continuity of the $L^{2}$ norm we have
\[
\int_{0}^{T}  \eta(t)\| u(\cdot, t)\|_{\ld}^{2} dt \leq \int_{0}^{T} \eta(t) v^{2}(t) dt,
\]
and as a consequence we obtain
\[
\| u(\cdot, t)\|_{\ld}\leq v(t) \hd \m{for a.a. \hd } t\in [0,T].
\]
Here $T>0$ is arbitrary and by theorem~\ref{istnienie} function $u$ is uniquely defined on $[0,\infty)$, thus we get  $\| u(\cdot, t)\|_{\ld}\leq v(t) \hd \m{for a.a. \hd } t\geq 0$. Applying proposition~\ref{decayode} we finish the proof  of theorem \ref{main}.
\end{proof}

\section{Appendix}

We prove here the estimates similar to Lemma 3.1 in \cite{Zacher}. The final inequality is the same, but in our case the kernel $k(t) = \izj \mg t^{-\al} d\al$ does not satisfy assumptions of Lemma 3.1 in \cite{Zacher}. Although, it is mentioned, that the proof for less regular kernels and more regular function is also possible. However, the authors do not refer to the case of the distributed Caputo derivative.
 Therefore we give here a detailed proof.

\begin{prop}\label{Awniosek}
Assume that $p\in [1,\infty)$, $w$ is continuous on $\overline{\Omega}\times [0,T]$ and
\eqq{\m{if } x\in \Omega, \m{then } w(x,\cdot )\in AC[0,T],  }{A33}
and for fixed $\gamma \in (0,1)$
\eqq{t^{1-\gamma}w_{t}\in L^{\infty}(\Omega^{T})}{A44}
holds. Then for each $t\in (0,T]$ the following estimate
\[
\dm \| w(\cdot , t) \|^{p}_{L^{p}(\Omega)} + \io \izj \al \mg \izt \taj |w(x,t)|^{p}  G\left( \ftt\right) d\tau d\al dx \]
\eqq{+ p \izj \mg t^{-\alpha} d\al \io |w(x,t)|^{p}  G\left( \frac{w(x,0)}{w(x,t)}  \right) dx  = p \io \dm w(x,t) \cdot \fptbt dx }{d}
holds, where $G(s)$ is nonnegative function given by formula $G(s)=\frac{1}{p}|s|^{p}-s+1-\frac{1}{p}$. Furthermore, we have
\eqq{\| w(\cdot, t) \|_{\lp}^{p-1}\dm \nlp{\wct} \leq  \io \dm w(x,t) \cdot \fptbt dx .}{a1}
\label{lpda}
\end{prop}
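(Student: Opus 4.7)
The plan is to treat identity (\ref{d}) as the spatial integral of a pointwise-in-$x$ convexity identity obtained by the standard integration-by-parts/chain-rule trick applied to the convex $C^{1}$ function $H(y)=\tfrac{1}{p}|y|^{p}$, and then to derive inequality (\ref{a1}) by applying the scalar version of (\ref{d}) to the nonnegative absolutely continuous function $\phi(t):=\|w(\cdot,t)\|_{L^{p}(\Omega)}$ and comparing the two resulting nonnegative correction terms via H\"older's inequality.

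For the pointwise identity I fix $x\in\Omega$ and use that $w(x,\cdot)\in AC[0,T]$ together with the representation $\dm v=k*v'$, $k(t)=\izj\mg t^{-\al}d\al$, applied to both $v=w$ and $v=H\circ w$. Setting $\Phi(\tau):=H(w(\tau))-H(w(t))-H'(w(t))(w(\tau)-w(t))$, one has $-\Phi'(\tau)=[H'(w(t))-H'(w(\tau))]\,w_\tau(\tau)$ and therefore
\[
H'(w(t))\dm w(t)-\dm H(w(t))=-\izt k(t-\tau)\Phi'(\tau)\,d\tau=k(t)\Phi(0)+\izt[-k'(t-\tau)]\Phi(\tau)\,d\tau
\]
after integrating by parts in $\tau$; the boundary contribution at $\tau=t$ vanishes because $\Phi(\tau)=O((w(\tau)-w(t))^{2})=O((t-\tau)^{2})$ by (\ref{A44}), dominating the weak singularity of $k$. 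The tangent-line characterization of convexity yields $\Phi\geq 0$, and a short computation identifies $\Phi(\tau)=|w(t)|^{p}\,G(w(\tau)/w(t))$. Substituting $-k'(s)=\izj\al\,\mg\,s^{-\al-1}d\al$ and $k(t)=\izj\mg\,t^{-\al}d\al$, multiplying by $p$, and integrating over $\Omega$ --- with the Fubini interchange $\io\dm H(w)\,dx=\dm\io H(w)\,dx=\tfrac{1}{p}\dm\|w(\cdot,t)\|^{p}_{L^{p}}$ justified by (\ref{A33})--(\ref{A44}) --- produces (\ref{d}).

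For (\ref{a1}) I first verify that $\phi$ is absolutely continuous on $[0,T]$ with $t^{1-\gamma}\phi_t\in L^\infty(0,T)$, using the chain rule $\phi_t(t)=\phi(t)^{1-p}\io|w(x,t)|^{p-2}w(x,t)w_t(x,t)\,dx$ on $\{\phi>0\}$ (the set $\{\phi=0\}$ contributes trivially). The scalar analog of (\ref{d}), valid for any such $\phi$, reads
\[
p\phi^{p-1}(t)\dm\phi(t)=\dm\phi^{p}(t)+p\izj\al\,\mg\izt(t-\tau)^{-\al-1}\phi^{p}(t)G\!\left(\tfrac{\phi(\tau)}{\phi(t)}\right)d\tau\,d\al+p\izj\mg\,t^{-\al}d\al\cdot\phi^{p}(t)G\!\left(\tfrac{\phi(0)}{\phi(t)}\right).
\]
Subtracting this from (\ref{d}) and noting $\dm\phi^{p}=\dm\|w\|^{p}_{L^{p}}$, the inequality (\ref{a1}) reduces to the pointwise estimate
\[
\io|w(x,t)|^{p}\,G\!\left(\tfrac{w(x,\tau)}{w(x,t)}\right)dx\ \geq\ \phi^{p}(t)\,G\!\left(\tfrac{\phi(\tau)}{\phi(t)}\right)\qquad\forall\,\tau\in[0,t]
\]
together with the analogous bound at $\tau=0$. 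Expanding $G(s)=\tfrac{1}{p}|s|^{p}-s+1-\tfrac{1}{p}$ on both sides, the $|s|^{p}$-terms and constant terms cancel exactly (since $\io|w(x,\tau)|^{p}dx=\phi^{p}(\tau)$), so what remains is just
\[
\io|w(x,t)|^{p-2}w(x,t)\,w(x,\tau)\,dx\ \leq\ \phi^{p-1}(t)\,\phi(\tau)=\|w(\cdot,t)\|_{L^{p}}^{p-1}\|w(\cdot,\tau)\|_{L^{p}},
\]
which is H\"older's inequality with conjugate exponents $p/(p-1)$ and $p$.

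The main technical hurdle will be justifying the vanishing boundary term at $\tau=t$ and the various Fubini-type interchanges under only the integrability assumption (\ref{aa}) on $\mu$ --- since $k$ can blow up at the origin --- but the regularity hypotheses (\ref{A33})--(\ref{A44}) on $w$ supply exactly the compensating regularity needed.
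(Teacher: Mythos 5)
Your proposal is essentially the paper's own proof: both parts rest on the same convexity identity for $H(y)=\tfrac1p|y|^p$ written through the function $G$ (your $\Phi(\tau)$ is exactly the paper's $|w(x,t)|^pG\bigl(\ftt\bigr)$), the same integration by parts in $\tau$ with the boundary term at $\tau=t$ killed by (\ref{A44}), and the same reduction of (\ref{a1}) to H\"older's inequality after subtracting the scalar identity for $\|w(\cdot,t)\|_{L^p(\Omega)}$. The only cosmetic imprecision is the claim $\Phi(\tau)=O((t-\tau)^2)$, which fails for $p<2$ at points where $w(x,t)=0$; but the first-order bound $\Phi(\tau)=O(t-\tau)$ from local Lipschitzness of $H$ already suffices to annihilate $k(t-\tau)$, which is exactly the estimate the paper uses.
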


\begin{proof}
\[
 \left[p \io \dm w(x,t)\cdot \fptbt dx - \dm \| w(\cdot , t) \|_{L^{p}(\Omega)}^{p}  \right]
\]
\[
=p\io \izj \mg \izt \ta w_{\tau }(x,\tau ) \left[ \fptbt - \fptabt \right]  d\tau d\al dx
\]
\[
=p\io \izj \mg |w(x,t)|^{p}\izt \ta \left(  \ftt \right)_{\tau} \left[ 1-\left|\ftt \right|^{p-2} \ftt  \right] d\tau d\al dx.
\]
For $G(s)=\frac{1}{p}|s|^{p}-s+1-\frac{1}{p}$ we have $G'(s)=s|s|^{p-2}-1$, hence we can write
\[
-p\io \izj \mg |w(x,t)|^{p}\izt \ta \left( G\left( \ftt\right) \right)_{\tau} d \tau d\al dx
\]
\[
= - p \lim_{h\rightarrow 0^{+}}  \io \izj \mg |w(x,t)|^{p} \izth \ta \left( G\left( \ftt\right) \right)_{\tau} d \tau d\al dx
\]
\[
= p\lim_{h\rightarrow 0^{+}}  \io \izj \al \mg |w(x,t)|^{p} \izth \taj G\left( \ftt\right)  d \tau d\al dx
\]
\[
- p \lim_{h\rightarrow 0^{+}}  \io \izj \mg |w(x,t)|^{p} \left. \ta  G\left( \ftt\right) \right|_{\tau=0}^{\tau = t-h} d\al dx
\]
\[
=  p\lim_{h\rightarrow 0^{+}} \io \izj \al \mg \izth \taj |w(x,t)|^{p}  G\left( \ftt\right) d\tau d\al dx
\]
\[
+p \izj \mg t^{-\alpha}d\al \io |w(x,t)|^{p} G\left( \frac{w(x,0)}{w(x,t)}  \right)  dx
\]
\[
-p\lim_{h\rightarrow 0^{+}}\izj \mg h^{-\alpha} d\al \io |w(x,t)|^{p} G\left( \frac{w(x,t-h)}{w(x,t)}  \right)dx.
\]
We shall show that the last limit is equal zero. Indeed, by definition we have
\[
 \io |w(x,t)|^{p} G\left( \frac{w(x,t-h)}{w(x,t)}  \right)dx
\]
\[
= \io |w(x,t)|^{p-2}w(x,t)\left[  w(x, t)-w(x,t-h) \right]dx  +\frac{1}{p}\io |w(x,t-h)|^{p}-|w(x,t)|^{p}dx,
\]
hence using the inequality $|a^{p}-b^{p}|\leq p (a^{p-1}+b^{p-1}) |a-b|$, which holds for positive $a,b$ we get
\[
\left|  \izj \mg h^{-\alpha} d\al \io |w(x,t)|^{p} G\left( \frac{w(x,t-h)}{w(x,t)}  \right)dx \right|
\]
\[
\leq 2 \izj \mg h^{-\alpha} d\al \io \left[ |w(x,t)|^{p-1}+|w(x,t-h)|^{p-1} \right] |w(x,t)-w(x,t-h)|dx.
\]
The above expression converges to zero, because $w$ is continuous and by (\ref{A44}) we have
\[
 |w(x,t-h) -w(x,t)|dx\leq  h(t-h)^{\gamma-1} \| t^{1-\gamma} w_{t} \|_{L^{\infty}(\Omega^{T})}.
\]
Thus,
\[
\left|  \izj \mg h^{-\alpha} d\al \io |w(x,t)|^{p} G\left( \frac{w(x,t-h)}{w(x,t)}  \right)dx \right|
\]
\[
\leq c(\Om) \norm{t^{1-\gamma}w_{t}}_{L^{\infty}(\Om^{T})}\norm{w}_{C(\Om^{T})}^{p-1} (t-h)^{\gamma-1} \izj \mg h^{1-\al} d\al \longrightarrow 0 \textrm { as } h \rightarrow 0.
\]
If we note that  $G(s)$ in nonnegative, then using monotone Lebesgue  we get
\[
 \lim_{h\rightarrow 0^{+}} \io \izj \al \mg \izth \taj  |w(x,t)|^{p}  G\left( \ftt\right)d\tau d\al dx
\]
\[
=\io \izj \al \mg \izt \taj |w(x,t)|^{p}  G\left( \ftt\right) d\tau d\al dx
\]
and the proof of (\ref{d}) is finished.

\no To get (\ref{a1}) we write
\[
\dm \nlp{\wct}^{p}- p \nlp{\wct}^{p-1} \dm \nlp{\wct}
\]
\[
=p \izj \mg \izt \ta \left( \nlp{\wcta}^{p-1}- \nlp{\wct}^{p-1} \right) \ddta \nlp{\wcta} \dt d\al
\]
\[
=p \izj \mg \izt \ta \nlp{\wct}^{p} \left[ \left(\frac{  \nlp{\wcta}}{\nlp{\wct}} \right)^{p-1}-1 \right] \ddta   \left(\frac{  \nlp{\wcta}}{\nlp{\wct}} \right)  \dt d\al
\]
\[
=p \izj \mg \izt \ta \nlp{\wct}^{p} \ddta \left[ G \left(\frac{  \nlp{\wcta}}{\nlp{\wct}} \right) \right]  \dt d\al
\]
\[
=- p \izj \al \mg \izt \taj \nlp{\wct}^{p}  G \left(\frac{  \nlp{\wcta}}{\nlp{\wct}} \right)   \dt d\al
\]
\[
+ p \left. \izj \mg  \ta \nlp{\wct}^{p}  G \left(\frac{  \nlp{\wcta}}{\nlp{\wct}} \right) \right|_{\tau =0}^{\tau =t} d\al
\]
\[
=-\izj \frac{\alpha \ma }{\gja} \izt \taj \left( \nlp{\wcta}^{p}- \nlp{\wct}^{p} \right) \dt d\al
\]
\[
+p \frac{\alpha \ma }{\gja} \izt \taj \nlp{\wct}^{p-1} \left( \nlp{\wcta}- \nlp{\wct} \right) \dt d\al
\]
\[
-\izj \frac{\ma}{\gja}  t^{-\alpha} \left( \nlp{w(\cdot, 0)}^{p}- \nlp{\wct}^{p} \right)d\al
\]
\[
+p\izj \frac{\ma  }{\gja} t^{-\alpha} \nlp{\wct}^{p-1} \left( \nlp{w(\cdot , 0)}- \nlp{\wct} \right)d\al
\]
Using the above equality and (\ref{d}) we have
\[
p \io \dm w(x,t) \cdot \fptbt dx = p \nlp{\wct}^{p-1} \dm \nlp{\wct}
\]
\[
+p \izj \frac{\alpha \ma}{\gja} \izt \taj \left[ \nlp{\wcta}\nlp{\wct}^{p-1}- \io  |w(x,t)|^{p-2} w(x,t)w(x,\tau) dx   \right] \dt d\al
\]
\[
+\izj \frac{\ma}{\gja} t^{-\alpha} d\al \left[ \nlp{w(\cdot,0)}\nlp{\wct}^{p-1}- \io  |w(x,t)|^{p-2} w(x,t)w(x,0) dx   \right] .
\]
By H\"older inequality the expressions in square brackets are nonnegative, thus
\[
p \io \dm w(x,t) \cdot \fptbt dx \geq p \nlp{\wct}^{p-1} \dm \nlp{\wct}.
\]

\end{proof}
We quote here theorem 1 from \cite{lmax}. However, in our consideration we need to weaken assumptions from this theorem. Thus, we give a short proof based on the one originally presented in \cite{lmax}.

\begin{lem}\label{luchko}
Let $f: [0,T]\rightarrow \mathbb{R}$ be an absolutely continuous function, which attains its  maximum over the interval $[0,T]$ at the point $t_{0} \in (0,T]$. If for some $\beta \in (0,1]$ and every  $\ka >0$ $f \in W^{1,\frac{1}{1-\beta}}(\ka,T)$,  then  $(D^{\al}f)(t_{0}) \geq 0$ for every $\al \in (0,\beta)$. In particular, if  $f \in W^{1,\infty}(\ka,T)$ for each $\ka>0$, then  $(D^{\al}f)(t_{0}) \geq 0$ for every $\al \in (0,1)$ and $\dm f(t_{0})\geq 0$ for any nonnegative $\mu $.
\end{lem}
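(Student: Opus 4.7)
The plan is to establish the pointwise Luchko-type inequality $(\da f)(t_0)\geq 0$ for each $\alpha \in (0,\beta)$ by rewriting $(\da f)(t_0)$ in a form where every term has a definite sign; the distributed-order conclusion will then follow by integrating against $\mu(\alpha)d\alpha$.

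First I would verify the representation
\[
(\da f)(t_0)=\jgja \int_{0}^{t_0}(t_0-\tau)^{-\alpha}f'(\tau)d\tau
\]
at the specific point $t=t_0$. Starting from $(\da f)(t)=\partial^{\alpha}[f-f(0)](t)$ and $f(\tau)-f(0)=\int_0^{\tau}f'(s)ds$, Fubini converts the Riemann--Liouville integral to $\tfrac{1}{1-\alpha}\int_0^t (t-s)^{1-\alpha}f'(s)ds$, and differentiating under the integral sign is justified because Hölder's inequality with conjugate exponents $1/(1-\beta)$ and $1/\beta$ makes $(t-s)^{-\alpha}f'(s)$ absolutely integrable in a neighbourhood of $t_0$ (we need $\alpha<\beta$ here).

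Next I would integrate by parts, writing $f'(\tau)=\frac{d}{d\tau}[f(\tau)-f(t_0)]$, to reach
\[
(\da f)(t_0)=\jgja\left[t_0^{-\alpha}\bigl(f(t_0)-f(0)\bigr)+\alpha\int_{0}^{t_0}(t_0-\tau)^{-\alpha-1}\bigl(f(t_0)-f(\tau)\bigr)d\tau\right].
\]
The main technical obstacle, and essentially the only one, is to show that the boundary contribution at $\tau=t_0$ vanishes and that the remaining improper integral converges. Both facts rest on the Hölder bound $|f(t_0)-f(\tau)|\leq \|f'\|_{L^{1/(1-\beta)}(\tau,t_0)}(t_0-\tau)^{\beta}$, which gives $(t_0-\tau)^{-\alpha}|f(t_0)-f(\tau)|=O\bigl((t_0-\tau)^{\beta-\alpha}\bigr)\to 0$ as $\tau\to t_0^{-}$, and an integrand dominated near $\tau=t_0$ by a constant times $(t_0-\tau)^{\beta-\alpha-1}$, which is integrable because $\beta>\alpha$; away from $t_0$ the integrand is bounded thanks to the continuity of $f$ on $[0,T]$.

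Once the displayed identity is proved, the conclusion is immediate: $f(t_0)-f(0)\geq 0$ and $f(t_0)-f(\tau)\geq 0$ for every $\tau\in[0,t_0]$ by the maximum hypothesis, so both summands are nonnegative and $(\da f)(t_0)\geq 0$. For the final assertion I take $\beta=1$, so that the inequality holds for every $\alpha\in(0,1)$; integrating against $\mu(\alpha)d\alpha$ and exchanging the order of integration by Tonelli (the integrand being nonnegative) then yields $(\dm f)(t_0)\geq 0$ for any nonnegative $\mu$.
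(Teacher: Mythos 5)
Your proof is correct and follows essentially the same route as the paper's: an integration by parts combined with the H\"older bound $|f(t_{0})-f(\tau)|\leq \|f'\|_{L^{1/(1-\beta)}(\tau,t_{0})}(t_{0}-\tau)^{\beta}$ to kill the singular boundary term at $\tau=t_{0}$, after which every remaining term is nonnegative by the maximum hypothesis. The only organizational difference is that the paper works with $g=f(t_{0})-f$, splits off $\int_{0}^{\ka}$ (made smaller than $\ve$ via the $L^{1}$-integrability of $f'$) and integrates by parts only on $(\ka,t_{0})$, whereas you integrate by parts over all of $(0,t_{0})$ at once; both variants are valid.
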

\begin{proof}
Firstly, we introduce the function $g(t):=f(t_{0}) - f(t)$ for $t \in [0,T]$.  We  notice that $g(t) \geq 0$ and $(D^{\al}g)(t) = -(D^{\al}f)(t)$ for $t \in [0,T]$. Function $g$ is absolutely  continuous and $g(t_{0})=0$, hence for $\ka\in (0, t_{0})$ we may write
\eqq{
|g(t)| \leq \int^{t_{0}}_{t}\abs{g'(s)}ds \leq \norm{g'}_{L^{\frac{1}{1-\beta}}(\ka,T)}\abs{t-t_{0}}^{\beta} \hd \m{ for } \hd t \in [\ka,t_{0}].
}{nbeta}
Thus for fixed $\al \in (0,\beta)$ we can write that
\[
(D^{\al}g)(t_{0}) = \frac{1}{\Gamma(1-\al)} \int_{0}^{\ka}(t_{0}-\tau)^{-\al}g'(\tau)d\tau + \frac{1}{\Gamma(1-\al)} \int_{\ka}^{t_{0}}(t_{0}-\tau)^{-\al}g'(\tau)d\tau.
\]
We fix  $\ep >0$. Function $g$ is absolutely continuous hence, $g'$ belongs to $L^{1}(0,T)$ and applying Young inequality we deduce that there exists  $\ka>0$ such that the first integral is smaller than $\ep$. To the second one we use integration by parts formula \[
\int_{\ka}^{t_{0}}(t_{0}-\tau)^{-\al}g'(\tau)d\tau = \lim_{h\rightarrow 0^{+}} \int_{\ka}^{t_{0}-h}(t_{0}-\tau)^{-\al}g'(\tau)d\tau
\]
\[
= -(t_{0}-\ka)^{-\al}g(\ka) + \lim_{h\rightarrow 0^{+}} h^{-\al}g(t_{0}-h)- \al\int_{\ka}^{t_{0}}(t_{0}-\tau)^{-\al-1}g(\tau)d\tau \leq 0,
\]
because by (\ref{nbeta})  the limit equals zero. Then $(\da g)(t_{0})\leq \ep $ and $(\da f)(t_{0})\geq -\ep$ for arbitrary $\ep>0$, which finishes the proof.
\end{proof}

\no We recall lemma~6 from \cite{nasza} (see also lemma~2.1 in \cite{Laplace}).

\begin{lem}\label{odwrotna}
Let $F$ be a complex function, satisfying following assumptions:
\begin{enumerate}[1)]
\item
$F(p)$ is analytic in $\mathbb{C} \setminus (-\infty, 0].$
\item
The limit $F^{\pm}(t):=\lim\limits_{\vf\rightarrow \pi^{-}}F(te^{\pm i \vf})$ exists for a.a. $t>0$ and $F^{+} = \overline{F^{-}}$.
\item
For each   $0<\eta<\pi$
\begin{enumerate}[a)]
\item
 $|F(p)| = o(1)$, as $|p|\rightarrow \infty$ uniformly on $ |\arg(p)| < \pi - \eta$
\item
$|F(p)|= o(\frac{1}{|p|})$, as $|p|\rightarrow 0$ uniformly on $|\arg(p)|< \pi - \eta$.
\end{enumerate}
\item
There exists $\ve_{0} \in (0,\frac{\pi}{2})$ and a function $a=a(r)$ such that \hd $\forall  \vf \in (\pi - \ve_{0}, \pi)$  the estimate  $\abs{F(re^{\pm i\vf})}   \leq a(r)$ holds, where $ \frac{a(r)}{1+r} \in L^{1}(\mr_{+})$.
\end{enumerate}
Then for $p\in \mathbb{C}$ such that $\Re{p}>0$ we have
\[
F(p) = \izi e^{-xp}f(x)dx, \textrm{  where  }f(x) = \frac{1}{\pi}\izi e^{-rx} \Im(F^{-}(r))dr.
\]
\end{lem}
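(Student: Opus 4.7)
The plan is to establish the formula via a keyhole–contour argument, combined with Fubini. Fix $p_{0}$ with $\Re p_{0}>0$. For large $R$, small $\rho>0$ and small $\eta\in(0,\ep_{0})$, I consider the positively oriented closed contour $\Gamma_{R,\rho,\eta}$ bounding the region
$\{p=se^{i\vf}:\rho<s<R,\ |\vf|<\pi-\eta\}$,
consisting of the large outer arc $C_{R}$, the two straight rays at angles $\pm(\pi-\eta)$, and the small inner arc $C_{\rho}$. By assumption 1, $F$ is analytic inside $\Gamma_{R,\rho,\eta}$ except at $p=p_{0}$ where $F(p)/(p-p_{0})$ has a simple pole with residue $F(p_{0})$, so Cauchy's formula gives
\[
F(p_{0})=\jpi\oint_{\Gamma_{R,\rho,\eta}}\frac{F(p)}{p-p_{0}}\,dp.
\]

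Next I pass to the limits $R\to\infty$, $\rho\to 0^{+}$, and $\eta\to 0^{+}$, estimating each piece. On $C_{R}$, assumption 3a gives $|F(p)|=o(1)$ uniformly on $|\arg p|<\pi-\eta$, so the $C_{R}$ contribution is bounded by $o(1)\cdot 2\pi R/(R-|p_{0}|)\to 0$. On $C_{\rho}$, assumption 3b gives $|F(p)|=o(1/|p|)$, so that contribution is bounded by $o(1/\rho)\cdot 2\pi\rho/(|p_{0}|-\rho)\to 0$. On the two rays, I invoke assumption 2 (existence of the boundary values $F^{\pm}(r)$ a.e.) for pointwise convergence and the majorant of assumption 4 for dominated convergence; the key integrability $a(r)/(1+r)\in L^{1}(\mr_{+})$ is exactly what is needed, since the denominator $|re^{\pm i(\pi-\eta)}-p_{0}|$ is bounded below by a constant times $1+r$ uniformly in small $\eta$. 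Combining and using $F^{+}=\overline{F^{-}}$ (so $F^{+}-F^{-}=-2i\,\Im F^{-}$), the two ray contributions fuse into
\[
F(p_{0})=\frac{1}{\pi}\izi\frac{\Im F^{-}(r)}{r+p_{0}}\,dr.
\]

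Finally, I recognize the kernel as a Laplace transform: for $\Re p_{0}>0$,
\[
\frac{1}{r+p_{0}}=\izi e^{-x(r+p_{0})}\,dx,
\]
and Fubini's theorem (again justified by the majorant, since $\Im F^{-}(r)$ satisfies $|\Im F^{-}(r)|\le a(r)$ as a pointwise limit along the rays, and $a(r)e^{-x\varepsilon}/(1+r)$ is integrable on $\mr_{+}\times(\varepsilon,\infty)$ for every $\varepsilon>0$) yields
\[
F(p_{0})=\izi e^{-xp_{0}}\left[\frac{1}{\pi}\izi e^{-rx}\Im F^{-}(r)\,dr\right]dx=\izi e^{-xp_{0}}f(x)\,dx,
\]
as claimed.

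The main technical obstacle is the ray analysis: the pointwise limits $F^{\pm}(r)$ are only guaranteed a.e., with no a priori integrability or growth bound, so the passage $\eta\to 0^{+}$ cannot be carried out by any general principle. Assumption 4 is introduced precisely to supply a uniform, integrable (against $dr/(1+r)$) majorant for $|F(re^{\pm i\vf})|$ along approaches to the cut, making both the dominated-convergence step on the contour and the later Fubini step legitimate. Everything else (vanishing of $C_{R}$ and $C_{\rho}$, algebraic fusion of the two rays, identification of the iterated integral as a Laplace transform) is routine once that majorant is in hand.
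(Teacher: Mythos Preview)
The paper does not actually prove this lemma: it is merely recalled in the appendix with the line ``We recall lemma~6 from \cite{nasza} (see also lemma~2.1 in \cite{Laplace})'', so there is no in-paper argument to compare against. Your keyhole-contour argument is the standard one and is essentially what appears in the cited references: Cauchy's formula on the slit plane, vanishing of the outer and inner arcs via assumptions 3a and 3b, dominated convergence on the two rays via the majorant $a(r)$ of assumption~4 (the bound $|re^{\pm i(\pi-\eta)}-p_{0}|\geq c(1+r)$ for $\eta<\ep_{0}<\pi/2$ is exactly why the hypothesis is $a(r)/(1+r)\in L^{1}$), the algebraic fusion of the rays using $F^{+}=\overline{F^{-}}$, and finally the Laplace identity $1/(r+p_{0})=\izi e^{-x(r+p_{0})}dx$ with Fubini.

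One small remark: your Fubini justification is phrased a bit awkwardly. The clean way is simply
\[
\izi\izi e^{-x(r+\Re p_{0})}\,a(r)\,dx\,dr=\izi\frac{a(r)}{r+\Re p_{0}}\,dr\leq C\izi\frac{a(r)}{1+r}\,dr<\infty,
\]
which directly licenses the interchange of the $x$- and $r$-integrals; no auxiliary $\varepsilon$-truncation is needed. Also be explicit that the limits must be taken in the order $R\to\infty$, $\rho\to 0^{+}$ with $\eta$ fixed (so that 3a, 3b apply uniformly), and only afterwards $\eta\to 0^{+}$ (where assumption~4 supplies the dominating function). Otherwise the argument is correct.
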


\no \textbf{Acknowledgments}. The authors are grateful to Prof. Masahiro Yamamoto for his inspiration to write this paper and for his hospitality during the stay at Tokyo University.

\end{document}